\newcommand{\A}{{\mathcal A}}
\newcommand{\CC}{{\mathcal C}}
\newcommand{\FF}{{\mathcal F}}
\newcommand{\HH}{{\mathcal H}}
\newcommand{\LL}{{\mathcal L}}
\newcommand{\OO}{{\mathcal O}}
\newcommand{\SSS}{{\mathcal S}}
\newcommand{\TT}{{\mathcal T}}
\newcommand{\VV}{{\mathcal V}}
\newcommand{\C}{{\mathbb C}}
\newcommand{\N}{{\mathbb N}}
\newcommand{\R}{{\mathbb R}}
\newcommand{\Z}{{\mathbb Z}}
\newcommand{\wit}{\widetilde}
\newcommand{\wih}{\widehat}
\newcommand{\supp}{{\operatorname{supp}}}
\newcommand{\Lip}{{\operatorname{Lip}}}
\newtheorem{teo}{Theorem}[section]
\newtheorem{lema}[teo]{Lemma}
\newtheorem{coro}[teo]{Corollary}
\newtheorem{defi}[teo]{Definition}
\newtheorem{remarko}[teo]{{\em Remark}}
{\theoremstyle{remark} }
\title[Variation for singular integrals on Lipschitz graphs]
{Variation for singular integrals on\\Lipschitz graphs: $L^p$ and endpoint estimates}
\author[A. MAS]{ALBERT MAS}
\date{September, 2011}
\subjclass[2010]{Primary 42B20, 42B25.} 
\keywords{$\rho$-variation and oscillation,
 Calder\'{o}n-Zygmund singular integrals.}
\thanks{The author is partially supported by grants AP2006-02416 (FPU program, Spain), MTM2010-16232 (Spain), and 2009SGR-000420 (Generalitat de Catalunya, Spain).}
\address{Departamento de Matem\'aticas, Universidad del Pa\'is Vasco, 48080 Bilbao (Spain)} \email{amasblesa@gmail.com}
\begin{document}

\begin{abstract}
Let $1\leq n<d$ be integers and let $\mu$ denote the $n$-dimensional Hausdorff measure restricted to an $n$-dimensional Lipschitz graph in $\R^d$ with slope strictly less than $1$. For $\rho>2$, we prove that the $\rho$-variation and oscillation for Calder\'{o}n-Zygmund singular integrals with odd kernel are bounded operators in $L^{p}(\mu)$ for $1<p<\infty$, from $L^1(\mu)$ to $L^{1,\infty}(\mu)$, and from $L^\infty(\mu)$ to $BMO(\mu)$. Concerning the first endpoint estimate, we actually show that such operators are bounded
from the space of finite complex Radon measures in $\R^d$ to $L^{1,\infty}(\mu)$. 
\end{abstract}
\maketitle
\section{Introduction}
The $\rho$-variation and oscillation for martingales and some
families of operators have been studied in many recent papers on
probability, ergodic theory, and harmonic analysis (see
\cite{Lepingle}, \cite{Bourgain}, \cite{JKRW-ergodic},
\cite{CJRW-Hilbert}, \cite{JSW}, \cite{Lacey}, and
\cite{OSTTW}, for example). In this paper we continue the study developed in \cite{MT1} and \cite{MT2} about the $\rho$-variation and oscillation for Calder\'{o}n-Zygmund singular integral operators with odd kernel defined on measures different form the Lebesgue measure. More precisely, we are concerned with variational $L^p$ ($1<p<\infty$) and endpoint estimates for such singular integral operators defined on Lipschitz graphs and with respect to the Hausdorff measure.
 
Throughout the paper $1\leq n<d$ denote two fixed integers. By an $n$-dimensional Lipschitz graph $\Gamma\subset\R^d$ we mean any translation and rotation of a set of the type $$\{x\in\R^d\,:\,x=(y,\A(y)),\, y\in\R^n\},$$ where $\A:\R^n\to\R^{d-n}$ is some Lipschitz function with Lipschitz constant $\Lip(\A)$. We say that $\Lip(\A)$ is the {\em slope} of $\Gamma$. 

Given $1\leq n<d$ integers, $\epsilon>0$, and a Radon measure $\mu$ in $\R^d$, we consider
\begin{equation}\label{SIO}
\begin{split}
T_{\epsilon}\mu(x) := \int_{|x-y|>\epsilon}K(x-y)\,d\mu(y),\quad\text{ for }x\in\R^d,
\end{split}
\end{equation}
where the kernel $K:\R^d\setminus\{0\}\to\C$ satisfies
\begin{equation}\label{4eq333}
|K(x)|\leq \frac{C}{|x|^{n}},\quad|\partial_{x_i}K(x)|\leq
\frac{C}{|x|^{n+1}}\quad\text{and}\quad|\partial_{x_i}\partial_{x_j}K(x)|\leq\frac{C}{|x|^{n+2}},
\end{equation}
for all $1\leq i,j\leq d$ and $x=(x_1,\ldots,x_d)\in\R^d\setminus\{0\}$, $C>0$ is some constant, and moreover $K(-x)=-K(x)$ for
all $x\neq0$ (i.e. $K$ is odd). We set $\TT\mu:=\{T_\epsilon\mu\}_{\epsilon>0}$, and given 
$f\in L^1(\mu)$, we also set $T_\epsilon^\mu f:=T_\epsilon(f\mu)$, $T^\mu_* f(x):=\sup_{\epsilon>0}|T_\epsilon^\mu f(x)|$, and $\TT^\mu f:=\{T_\epsilon^\mu f\}_{\epsilon>0}$. The well-known Cauchy and $n$-dimensional Riesz transforms are two very important examples of such Calder\'{o}n-Zygmund singular integral operators, and they correspond to the kernels $K(x)=1/x$ for $x\in\C\setminus\{0\}$ and $K(x)=x/|x|^{n+1}$ for $x\in\R^d\setminus\{0\}$ respectively (to be precise, we should consider the scalar components $x_i/|x|^{n+1}$).

\begin{defi}[$\rho$-variation and oscillation]
Let $\mathcal{F}:=\{F_\epsilon\}_{\epsilon>0}$ be a family of
functions defined on $\R^d$. Given  $\rho>0$, the $\rho$-{\em
variation} of $\FF$ at $x\in\R^d$ is defined by
\begin{equation*}
\VV_{\rho}(\FF)(x):=\sup_{\{\epsilon_{m}\}}\bigg(\sum_{m\in\Z}
|F_{\epsilon_{m+1}}(x)-F_{\epsilon_{m}}(x)|^{\rho}\bigg)^{1/\rho},
\end{equation*}
where the pointwise supremum is taken over all decreasing
sequences $\{\epsilon_{m}\}_{m\in\Z}\subset(0,\infty)$. Fix a decreasing
sequence $\{r_{m}\}_{m\in\Z}\subset(0,\infty)$. The {\em oscillation} of $\FF$
at $x\in\R^d$ is defined by
\begin{equation*}
\OO(\FF)(x):=\sup_{\{\epsilon_m\},\{\delta_{m}\}}\bigg(\sum_{m\in\Z}
|F_{\epsilon_m}(x)-F_{\delta_m}(x)|^{2}\bigg)^{1/2},
\end{equation*}
where the pointwise supremum is taken over all sequences
$\{\epsilon_m\}_{m\in\Z}$ and $\{\delta_{m}\}_{m\in\Z}$
such that $r_{m+1}\leq\epsilon_{m}\leq\delta_{m}\leq r_{m}$ for all
$m\in\Z$.
\end{defi}

Given a Radon measure $\mu$ in $\R^d$, $f\in L^1(\mu)$, and $x\in\R^d$, we will deal with
\begin{equation*}
\begin{split}
(\VV_{\rho}\circ\TT)\mu(x):=\VV_{\rho}(\TT\mu)(x),\quad
&\text{and}\quad
(\VV_{\rho}\circ\TT^\mu)f(x):=\VV_{\rho}(\TT^\mu f)(x),\\ (\OO\circ\TT)\mu(x):=\OO(\TT\mu)(x),\quad&\text{and}\quad
(\OO\circ\TT^\mu)f(x):=\OO(\TT^\mu f)(x).
\end{split}
\end{equation*}
For a Borel set $E\subset\R^d$, we denote by $\HH^n_E$ the $n$-dimensional Hausdorff measure resticted to $E$. The following result is a direct consequence of \cite[Theorem 1.3]{MT2}.
\begin{teo}\label{4main theorem lip}
Let $\rho>2$. Let $\Gamma\subset\R^d$ be an $n$-dimensional Lipschitz graph and set $\mu:=\HH^{n}_{\Gamma}$. Then, 
$\VV_{\rho}\circ\TT^{\mu}$ and
$\OO\circ\TT^{\mu}$ are bounded operators
in $L^2(\mu)$. The norms of these operators are bounded by some constants depending only on $n$, $d$, $K$, and the slope of $\Gamma$ (and on $\rho$ in the case of $\VV_{\rho}\circ\TT^{\mu}$). In particular, the norm of $\OO\circ\TT^{\mu}$ is bounded independently of the sequence that defines $\OO$.
\end{teo}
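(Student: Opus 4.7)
The plan is to reduce the statement to a direct invocation of \cite[Theorem 1.3]{MT2}. That result asserts the $L^2(\mu)$-boundedness of $\VV_\rho \circ \TT^\mu$ and $\OO \circ \TT^\mu$, for $\rho > 2$, under two structural hypotheses on $\mu$: that $\mu$ is $n$-dimensional Ahlfors--David regular, and that the truncated operators $\{T^\mu_\epsilon\}_{\epsilon>0}$ are uniformly bounded in $L^2(\mu)$. Hence the whole proof amounts to verifying these two hypotheses for $\mu := \HH^n_\Gamma$.

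The AD-regularity is elementary: the graph parametrization $\varphi(y) := (y,\A(y))$ is bi-Lipschitz from $\R^n$ onto $\Gamma$ with bi-Lipschitz constants depending only on $\Lip(\A)$, so that
\[
C^{-1} r^n \;\leq\; \HH^n_\Gamma(B(x,r)) \;\leq\; C r^n \qquad \text{for every } x \in \Gamma,\ r>0,
\]
with $C = C(n,d,\Lip(\A))$. The uniform $L^2(\mu)$-boundedness of $\{T^\mu_\epsilon\}_{\epsilon>0}$ for an odd Calder\'on--Zygmund kernel satisfying \eqref{4eq333} on a Lipschitz graph is classical: it follows from the Coifman--McIntosh--Meyer theorem on the Cauchy integral on Lipschitz curves and its $n$-dimensional extensions due to David, combined with the standard $T(b)$ machinery, which absorbs the smoothness bounds \eqref{4eq333} and exploits the oddness of $K$. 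The resulting operator norm depends only on $n$, $d$, $K$, and $\Lip(\A)$.

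With both hypotheses in place, \cite[Theorem 1.3]{MT2} applies directly and yields the two claimed estimates with constants depending only on the stated parameters (and on $\rho$ in the variation case). The uniformity of the oscillation bound in the defining sequence $\{r_m\}$ is inherited from the corresponding uniformity built into \cite[Theorem 1.3]{MT2}. The only conceivable obstacle is that \cite[Theorem 1.3]{MT2} might be stated under hypotheses slightly stronger than plain AD-regularity plus $L^2$-boundedness (for instance, uniform rectifiability of $\supp\mu$, or the availability of a suitable corona-type decomposition); any such additional condition holds automatically on a Lipschitz graph, so no further work is required, and the deduction is indeed immediate.
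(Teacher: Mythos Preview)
Your proposal is correct and follows essentially the same route as the paper: the paper states the theorem as ``a direct consequence of \cite[Theorem 1.3]{MT2}'' and simply notes that Lipschitz graphs trivially satisfy the hypotheses of that result. The only minor discrepancy is that, according to the paper, the standing hypotheses of \cite[Theorem 1.3]{MT2} are AD-regularity together with uniform $n$-rectifiability (rather than AD-regularity plus uniform $L^2$-boundedness of the truncations), but you already anticipated this possibility in your final paragraph, and Lipschitz graphs satisfy it just as automatically.
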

Actually, from \cite[Theorem 1.3]{MT2} one has that Theorem \ref{4main theorem lip} holds whenever $\mu$ is an $n$-dimensional AD regular uniformly $n$-rectifiable measure in $\R^d$ (see \cite[Part I]{DS2} for the precise definitions of AD regularity and uniform rectifiability). Let us just mention that this latter assumptions on $\mu$ are some geometry-measure theoretic properties of homogeneity and of quantitative rectifiability which are trivially satisfied by Lipschitz graphs. Furthermore, in \cite{MT1} it is also proved that, if $\mu=\HH^n_\Gamma$ for some $n$-dimensional Lipschitz graph $\Gamma\subset\R^d$, $\varphi\in\CC^\infty(\R)$ is some fixed function such that $\chi_{[2,\infty)}\leq\varphi\leq\chi_{[1/2,\infty)}$, 
\begin{equation}\label{smooth SIO}
\begin{split}
T_{\varphi_\epsilon}^\mu f(x) := \int\varphi(|x-y|/\epsilon)K(x-y)f(y)\,d\mu(y)\quad\text{ for }x\in\R^d\text{ and }f\in L^1(\mu),
\end{split}
\end{equation}
and $\TT_\varphi^{\mu}:=\{T_{\varphi_\epsilon}^{\mu}\}_{\epsilon>0}$, then the operators $\VV_{\rho}\circ\TT_\varphi^{\mu}$ and $\OO\circ\TT_\varphi^{\mu}$ are bounded 
\begin{itemize}
\item[$(a)$] in $L^p(\mu)$ for all $1<p<\infty$,
\item[$(b)$] from $L^1(\mu)$ to $L^{1,\infty}(\mu)$, and 
\item[$(c)$] from $L^\infty(\mu)$ to $BMO(\mu)$ (see Section \ref{teorema Lp no suau s2} for the precise definition of $BMO(\mu)$).
\end{itemize}

Usually, we refer to $\TT^{\mu}$ as {\em the family of rough truncations of the singular integral operator with kernel $K$ and with respect to $\mu$}, and we refer to $\TT_\varphi^{\mu}$ as {\em the family of smooth truncations} of the same operator.

The following theorem is one of the main results of this paper. Roughly speaking, under an extra assumption on the slope of the Lipschitz graph, it improves Theorem \ref{4main theorem lip} and extends the estimates $(a)$, $(b)$, and $(c)$ above to rough truncations.

\begin{teo}\label{4unif rectif teo3b}
Let $\rho>2$. Let $\Gamma\subset\R^d$ be an $n$-dimensional Lipschitz graph with slope strictly less than $1$ and set $\mu:=\HH^{n}_{\Gamma}$. Then, $\VV_{\rho}\circ\TT^{\mu}$ and
$\OO\circ\TT^{\mu}$ are bounded operators
\begin{itemize}
\item[$(a)$] in $L^p(\mu)$ for all $1<p<\infty$,
\item[$(b)$] from $L^1(\mu)$ to $L^{1,\infty}(\mu)$, and 
\item[$(c)$] from $L^\infty(\mu)$ to $BMO(\mu)$,
\end{itemize}
The norms of these operators are bounded by some constants depending only on $n$, $d$, $K$, the slope of $\Gamma$, on $\rho$ in the case of $\VV_{\rho}\circ\TT^{\mu}$, and on $p$ in the case of $(a)$. In particular, the norm of $\OO\circ\TT^{\mu}$ is bounded independently of the sequence that defines $\OO$.
\end{teo}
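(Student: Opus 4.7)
The strategy is to use the $L^2(\mu)$ boundedness supplied by Theorem~\ref{4main theorem lip} as the starting ingredient and upgrade it to (a)--(c) via a Calder\'on--Zygmund scheme on the $n$-AD regular measure $\mu=\HH^n_\Gamma$, taking the shortcut that the corresponding statements for the smooth-truncation family $\TT_\varphi^\mu$ are already known from \cite{MT1}. Setting $D_\epsilon^\mu f:=T_\epsilon^\mu f-T_{\varphi_\epsilon}^\mu f$, subadditivity of $\VV_\rho$ and $\OO$ gives
$$\VV_\rho(\TT^\mu f)(x)\le \VV_\rho(\TT_\varphi^\mu f)(x)+\VV_\rho(\{D_\epsilon^\mu f\}_\epsilon)(x),$$
and analogously for $\OO$. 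Since (a)--(c) for the first summand on the right is provided by \cite{MT1}, the entire problem reduces to proving the same three estimates for the variation and oscillation of the difference family $\{D_\epsilon^\mu f\}_\epsilon$. The kernel of $D_\epsilon^\mu$ is supported in the annulus $\epsilon/2\le|x-y|\le 2\epsilon$ and is pointwise bounded by $C|x-y|^{-n}$, so the trivial estimate $|D_\epsilon^\mu f|\lesssim M_\mu f$ (with $M_\mu$ the Hardy--Littlewood maximal operator associated with $\mu$) is immediate; it is however far too crude to control the $\rho$-variation.

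The $L^2$ boundedness of $\VV_\rho(\{D_\epsilon^\mu f\}_\epsilon)$ and $\OO(\{D_\epsilon^\mu f\}_\epsilon)$ follows at once from subadditivity, Theorem~\ref{4main theorem lip}, and the $L^2$ estimate for the smooth family. The bulk of the work is then a Calder\'on--Zygmund decomposition $f=g+\sum_Q b_Q$ adapted to $\mu$, with $\int b_Q\,d\mu=0$ and $\supp b_Q$ contained in a bad cube $Q$: the good part is handled by interpolation with the $L^2$ bound, and for the bad part one must estimate $\VV_\rho(\{D_\epsilon^\mu b_Q\}_\epsilon)(x)$ for $x$ outside a fixed dilate of $Q$. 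For the smooth-truncation counterpart this step is routine via smoothness of the kernel in $y$; the rough truncation's characteristic-function jump at $|x-y|=\epsilon$ is the real obstacle, and it is here that the hypothesis $\Lip(\A)<1$ enters. Under this assumption, $\Gamma\cap B(x,r)$ is a bi-Lipschitz graph over a Euclidean $n$-ball with uniform distortion, and a first-order Taylor expansion of the odd kernel $K$ against the tangent plane at $x$ yields an annular near-cancellation of the form
$$\Big|\int_{\Gamma\cap(B(x,r_2)\setminus B(x,r_1))} K(x-y)\,d\mu(y)\Big|\lesssim \Lip(\A),$$
uniformly in $0<r_1<r_2$, which dominates the contribution of each jump along a decreasing sequence of scales.

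Equipped with this cancellation, the weak-$(1,1)$ bound (b) should follow from the Calder\'on--Zygmund decomposition together with a Cotlar-type bound on the $\ell^2$ square function of the differences $D_{\epsilon_{m+1}}^\mu b_Q-D_{\epsilon_m}^\mu b_Q$, converted to an $\ell^\rho$ bound by exploiting $\rho>2$ via Cauchy--Schwarz. The $L^p$ estimate (a) for $1<p\le 2$ then follows by Marcinkiewicz interpolation between (b) and Theorem~\ref{4main theorem lip}, and the $L^\infty\to BMO$ estimate (c), together with (a) in the range $2<p<\infty$, is obtained by a standard duality and good-$\lambda$ argument in the spirit of the classical Fefferman--Stein theory, the oddness of $K$ providing the symmetry needed on the adjoint side. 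The main obstacle I anticipate is precisely the quantitative square-function estimate controlling the sum of rough-truncation jumps across a general sequence $\{\epsilon_m\}$: the annular cancellation above is only of size $\Lip(\A)$ per scale, a naive summation would diverge logarithmically, and genuine orthogonality between different scales on $\Gamma$ together with a delicate quantitative use of $\rho>2$ will be required to close the estimate.
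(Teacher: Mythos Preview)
Your proposal has a genuine gap: you correctly identify the obstacle --- summing the rough-truncation jumps across scales --- but you do not resolve it, and the mechanism you propose for handling it is not the right one.

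First, the reduction $T_\epsilon^\mu=T_{\varphi_\epsilon}^\mu+D_\epsilon^\mu$ does not buy anything. All the non-smoothness of the truncation is carried by $D_\epsilon^\mu$, so $\VV_\rho(\{D_\epsilon^\mu f\})$ is essentially as hard as $\VV_\rho(\TT^\mu f)$ itself; the results of \cite{MT1} for $\TT_\varphi^\mu$ do not shortcut the problem. The paper works with $\TT^\mu$ directly.

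Second, and more importantly, your ``annular near-cancellation'' via oddness of $K$ and a Taylor expansion against a tangent plane is not the mechanism by which the hypothesis $\Lip(\A)<1$ enters. A Lipschitz graph has no tangent plane in general, the bound you write down is merely a uniform single-scale bound (essentially $T_*^\mu 1\lesssim 1$), and --- as you yourself note --- summing it diverges. The paper's key input is purely geometric and has nothing to do with the kernel: Lemma~\ref{lema pendent petita3} shows that if $\Lip(\A)<1$ then thin annuli have small $\mu$-measure,
\[
\mu\big(A(z,a,b)\big)\le C\,(b-a)\,b^{\,n-1}\qquad(z\in\Gamma,\ 0<a\le b).
\]
This is sharp in the slope (it fails if $\Lip(\A)\ge1$; see Remark~\ref{rem499}), and it is what replaces your missing orthogonality: in the Calder\'on--Zygmund decomposition for $(b)$, the ``boundary'' contributions --- bad cubes meeting $\partial A(x,\epsilon_{m+1},\epsilon_m)$ --- are summed via this annulus estimate, while in the $L^\infty\to BMO$ proof of $(c)$ it controls the symmetric difference $A(x,\epsilon_{m+1},\epsilon_m)\triangle A(z_D,\epsilon_{m+1},\epsilon_m)$. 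With this lemma in hand, $(b)$ and $(c)$ follow by direct (if lengthy) computation, and $(a)$ then follows by interpolation between $(L^1,L^{1,\infty})$ and $(L^2,L^2)$ for $1<p<2$, and between $(L^2,L^2)$ and $(L^\infty,BMO)$ for $2<p<\infty$, the latter via a sublinear variant of the Fefferman--Stein interpolation theorem (Lemma~\ref{lema interpolacio}). No duality argument is used; $\VV_\rho\circ\TT^\mu$ is not linear.
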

This theorem generalizes the results in \cite{CJRW-singular integrals} for the class of kernels given by (\ref{4eq333}) and, in this sense, it is a natural continuation of the study of variational inequalities for Calder\'on-Zygmund singular integral operators.

As we pointed out above, Theorem \ref{4unif rectif teo3b} was already known for the family $\TT^{\mu}_\varphi$, but the case of rough truncations requires much more work and detail on the estimates due to the lack of regularity on the truncations. Moreover, \cite[Theorem 1.3]{MT2} (and so Theorem \ref{4main theorem lip}) where obtained using the so-called {\em corona decomposition} (see \cite[Chapter 3 of Part I]{DS2}), which is a useful tool to deal with  $L^2$ estimates. However, it is very difficult to adapt that techniques to deal with $L^p$ estimates for $p\neq2$. Thus, Theorem \ref{4unif rectif teo3b} does not follow from the variational $L^p$ estimates for $\TT^{\mu}_\varphi$, nor by a simple modification of the proof of Theorem \ref{4main theorem lip}, it requires a more careful and deeper study.

We denote by $M(\R^d)$ the space of finite complex Radon measures on $\R^d$ equipped with the norm given by the variation of measures.
The other main result of this paper is the following theorem, which strengthens the endpoint estimate $(b)$ of Theorem \ref{4unif rectif teo3b}. Moreover, in combination with the techniques used in \cite{MT2}, we think that the following theorem could be useful to derive $L^p$ ($1<p<\infty$) and endpoint estimates for $\VV_{\rho}\circ\TT^{\mu}$ and $\OO\circ\TT^{\mu}$ when $\mu$ is any $n$-dimensional AD regular uniformly $n$-rectifiable measure in $\R^d$, which would enhance \cite[Theorems 1.3 and 2.3]{MT2}. 

\begin{teo}\label{4unif rectif teo3}
Let $\rho>2$. Let $\Gamma\subset\R^d$ be an $n$-dimensional Lipschitz graph with slope strictly less than $1$ and set $\mu:=\HH^{n}_{\Gamma}$. Then, $\VV_{\rho}\circ\TT$ and
$\OO\circ\TT$ are bounded operators from $M(\R^d)$ to $L^{1,\infty}(\mu)$, i.e., there exist constants $C_1,C_2>0$ such that, for all $\lambda>0$ and all $\nu\in M(\R^d)$, 
$$\mu\{x\in\R^d:\,(\VV_{\rho}\circ\TT)\nu(x)>\lambda\}\leq\frac{C_1}{\lambda}\,\|\nu\|\quad\text{and}\quad
\mu\{x\in\R^d:\,(\OO\circ\TT)\nu(x)>\lambda\}\leq\frac{C_2}{\lambda}\,\|\nu\|.$$
Moreover, the constants $C_1$ and $C_2$ only depend on $n$, $d$, $K$, and the slope of $\Gamma$ (and on $\rho$ in the case of $C_1$). In particular, $C_2$ does not depend on the sequence that defines $\OO$. 
\end{teo}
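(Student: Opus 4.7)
The plan is to prove Theorem~\ref{4unif rectif teo3} by a Calder\'on--Zygmund decomposition of $\nu$ at height $\lambda$ relative to $\mu$, and to estimate the resulting ``good'' and ``bad'' parts separately: the former by the $L^2(\mu)$ bound of Theorem~\ref{4unif rectif teo3b}(a), the latter by the smoothness of $K$ together with the cancellation built into the decomposition.

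More precisely, using that $\mu$ is $n$-dimensional AD regular with constants depending on the slope of $\Gamma$ (so that every ball centered on $\Gamma$ is $\mu$-doubling), I would first construct a family of balls $\{B_i\}_i$ centered on $\Gamma$ and a decomposition
\[
\nu = g\,\mu + \sum_i b_i,
\]
with $\|g\|_{L^\infty(\mu)}\lesssim\lambda$ and $\|g\|_{L^1(\mu)}\lesssim\|\nu\|$; $\supp b_i\subset B_i$, $b_i(\R^d)=0$, and $\|b_i\|\lesssim\lambda\,\mu(B_i)$; bounded overlap of $\{2B_i\}$; and $\sum_i\mu(2B_i)\lesssim\|\nu\|/\lambda$. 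This is a variant of the non-doubling Calder\'on--Zygmund decomposition of Tolsa, built from a Whitney-type covering of the level set $\{x\in\Gamma:M_\mu\nu(x)>\lambda\}$ of the centered maximal function $M_\mu\nu(x):=\sup_{r>0}|\nu|(B(x,r))/\mu(B(x,r))$, whose weak-type bound $\mu\{M_\mu\nu>\lambda\}\lesssim\|\nu\|/\lambda$ comes from a Besicovitch covering argument.

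For the good part, Theorem~\ref{4unif rectif teo3b}(a) with $p=2$ gives
\[
\mu\{x:(\VV_\rho\circ\TT)(g\mu)(x)>\lambda/2\}\le\frac{C\,\|g\|_{L^2(\mu)}^2}{\lambda^2}\le\frac{C\,\|g\|_{L^\infty(\mu)}\|g\|_{L^1(\mu)}}{\lambda^2}\lesssim\frac{\|\nu\|}{\lambda},
\]
and the analogous estimate for $\OO\circ\TT$. Setting $R:=\bigcup_i 2B_i$, one has $\mu(R)\lesssim\|\nu\|/\lambda$, so it remains to bound $(\VV_\rho\circ\TT)(\sum_i b_i)$ on $\R^d\setminus R$.

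The main obstacle is this bound for the bad part, the variational and measure-valued analogue of the classical H\"ormander estimate, in which all rough truncation scales must be controlled simultaneously. For $x\notin 2B_i$, writing $c_i,r_i$ for the center and radius of $B_i$ and $d_i=|x-c_i|$, one observes that $T_\epsilon b_i(x)=0$ for $\epsilon\ge d_i+r_i$ and $T_\epsilon b_i(x)=Tb_i(x)$ for $\epsilon\le d_i-r_i$; the ``outer'' value $|Tb_i(x)|\lesssim r_i\|b_i\|/d_i^{n+1}$ is acceptable by the zero-mean condition and the gradient bound on $K$. The delicate piece is the inner annulus $d_i-r_i<\epsilon<d_i+r_i$, where the crude total-variation bound $\|b_i\|/d_i^n$ is not integrable in $x$; to recover the gain intrinsic to $\rho$-variation (as opposed to $1$-variation), I would compare the rough truncations with the smooth ones $T_{\varphi_\epsilon}$ of \eqref{smooth SIO}. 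Since $T_\epsilon-T_{\varphi_\epsilon}$ is a convolution against a bump supported in a scale-$\epsilon$ annulus, its pointwise size is controlled by a maximal operator applied to $b_i$, and its $\rho$-variation can be handled via the variational estimates for $\TT_\varphi^\mu$ already available from \cite{MT1}, combined with the AD regularity of $\mu$ along $\Gamma$. Summing the resulting bounds over $i$ and invoking Chebyshev yields the required $\lesssim\|\nu\|/\lambda$ bound on the exceptional measure. The oscillation operator $\OO\circ\TT$ is treated in an identical fashion, with the $\ell^2$-sum in the definition of $\OO$ playing the role of the $\ell^\rho$-sum; in particular, the resulting constant is independent of the defining sequence $\{r_m\}$.
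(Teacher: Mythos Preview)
Your outline has a genuine gap at the crucial step, and also a circularity issue.

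\textbf{Circularity.} For the good part you invoke Theorem~\ref{4unif rectif teo3b}(a) with $p=2$. But in the paper the $L^p$ estimates of Theorem~\ref{4unif rectif teo3b} are obtained \emph{after} Theorem~\ref{4unif rectif teo3}, by interpolation with the weak $(1,1)$ bound that Theorem~\ref{4unif rectif teo3} provides. The correct input for the good part is Theorem~\ref{4main theorem lip} (the $L^2$ bound on Lipschitz graphs coming from \cite{MT2}), which does not require Theorem~\ref{4unif rectif teo3}.

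\textbf{The bad part.} Your plan to reduce rough truncations to smooth ones via $T_\epsilon-T_{\varphi_\epsilon}$ does not close. First, the results of \cite{MT1} you invoke concern $\TT_\varphi^{\mu}$ acting on $L^1(\mu)$ functions, not on arbitrary measures; your bad pieces $b_i$ are genuine (signed) measures, in general singular with respect to $\mu$, so you cannot simply feed them into those estimates. Second, and more seriously, the family $\{(T_\epsilon-T_{\varphi_\epsilon})b_i\}_{\epsilon>0}$ still carries the same roughness as $\{T_\epsilon b_i\}_{\epsilon>0}$: the pointwise bound $|(T_\epsilon-T_{\varphi_\epsilon})b_i(x)|\lesssim \epsilon^{-n}|b_i|(A(x,\epsilon/2,2\epsilon))$ gives you nothing for $\VV_\rho$ beyond the $\ell^1$ sum, which after integration in $x\notin 2B_i$ yields a divergent $\int |x-c_i|^{-n}\,d\mu(x)$. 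A maximal-function bound on a \emph{single} difference does not control the \emph{variation} over all scales. Nowhere in your sketch do you explain how $\rho>2$ (rather than $\rho\ge 1$) or the hypothesis $\Lip(\A)<1$ enter, and both are essential.

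\textbf{What the paper actually does.} The paper does not compare with smooth truncations. For each $x$ and each $m$ it separates those $j$ with $R_j\subset A(x,\epsilon_{m+1},\epsilon_m)$ (the ``interior'' sum, handled by the standard H\"ormander estimate using cancellation of $\nu_b^j$) from those with $R_j\cap\partial A(x,\epsilon_{m+1},\epsilon_m)\neq\emptyset$ (the ``boundary'' sum). The boundary sum is the hard term; it is split further into ``long'' jumps (dyadic endpoints) and ``short'' jumps (within a dyadic scale), and both are controlled using the key annulus estimate
\[
\HH^n_\Gamma\big(A(z,a,b)\big)\le C\,(b-a)\,b^{n-1}\qquad(z\in\Gamma,\ 0<a\le b),
\]
which is Lemma~\ref{lema pendent petita3} and is precisely where the assumption that the slope is $<1$ is used (it fails for slope $\ge 1$, see Remark~\ref{rem499}). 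For the short-jump piece one also needs $\rho\ge 2$ to pass to an $\ell^2$ sum and then to exploit a Cauchy--Schwarz argument across dyadic size classes of the cubes $R_j$. Your proposal does not identify this annulus estimate, and without it the argument cannot be completed; AD regularity alone is not enough.
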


\begin{remarko}
{\em We think that the assumption on the smallness of the slope of the Lipschitz graph in Theorems \ref{4unif rectif teo3b} and \ref{4unif rectif teo3} is just a technical obstruction due to the arguments we will employ in their proofs. As pointed out in the paragraph above Theorem \ref{4unif rectif teo3}, we expect that this assumption will be removed in the future.}
\end{remarko}

The following corollary is a direct consequence of Theorem \ref{4unif rectif teo3}.
\begin{coro}\label{coro pppp}
Let $E$ be an $\HH^n$ measurable $n$-rectifiable subset of $\R^d$ with $\HH^n(E)<\infty$, and let $K$ be an odd kernel satisfying $(\ref{4eq333})$. If $\nu\in M(\R^d)$, then the principal values $\lim_{\epsilon\searrow0}T_\epsilon\nu(x)$ exist for $\HH^n$ almost all $x\in E$.
\end{coro}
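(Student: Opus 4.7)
The plan is to reduce the existence of principal values to the pointwise finiteness of $\VV_\rho\circ\TT$, and then transfer the weak-type bound of Theorem \ref{4unif rectif teo3} from Lipschitz graphs of small slope to the general rectifiable set $E$. The first step is the elementary observation that if $\VV_\rho(\TT)\nu(x)<\infty$ then $\lim_{\epsilon\searrow0}T_\epsilon\nu(x)$ exists: if this limit failed to exist, one could construct a decreasing sequence $\{\eta_k\}\subset(0,\infty)$ along which $|T_{\eta_{k+1}}\nu(x)-T_{\eta_k}\nu(x)|\geq\delta$ for some fixed $\delta>0$ and all $k$, contradicting the finiteness of $\VV_\rho(\TT)\nu(x)$; independence of the limit from the approximating sequence follows by interleaving two such sequences. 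Hence it suffices to show that $\VV_\rho(\TT)\nu<\infty$ $\HH^n$-almost everywhere on $E$.

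For this I would use the $n$-rectifiability of $E$ together with $\HH^n(E)<\infty$: up to an $\HH^n$-null subset, $E$ is contained in a countable union of $n$-dimensional Lipschitz graphs. Using the existence of an approximate tangent $n$-plane at $\HH^n$-almost every point of $E$, one may further subdivide and rotate these graphs so as to obtain a countable family $\{\Gamma_k\}_{k\in\N}$ of full $n$-dimensional Lipschitz graphs in $\R^d$ of slope strictly less than $1$ whose union still covers $E$ modulo an $\HH^n$-null set. For each $k$, set $\mu_k:=\HH^n_{\Gamma_k}$; Theorem \ref{4unif rectif teo3} applied to $\mu_k$ and $\nu$ yields
\begin{equation*}
\mu_k\{x\in\R^d:\VV_\rho(\TT)\nu(x)>\lambda\}\leq\frac{C_1\|\nu\|}{\lambda},\qquad\lambda>0,
\end{equation*}
and letting $\lambda\to\infty$ shows $\VV_\rho(\TT)\nu<\infty$ $\mu_k$-almost everywhere, hence $\HH^n$-almost everywhere on $\Gamma_k$. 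Taking the countable union of the resulting $\HH^n$-null exceptional sets gives $\VV_\rho(\TT)\nu<\infty$ $\HH^n$-almost everywhere on $E$, and the corollary follows from the first paragraph.

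The main technical point is the rectifiable decomposition, where each piece of $E$ must be arranged to lie on a full $n$-dimensional Lipschitz graph of slope strictly less than $1$, since this smallness of slope is a genuine hypothesis of Theorem \ref{4unif rectif teo3}. This step is standard in the geometric measure theory of rectifiable sets (it relies only on the a.e.\ existence of approximate tangent planes together with a local graph representation), but it must be invoked with care so that the ambient graphs on which Theorem \ref{4unif rectif teo3} is applied have the required global slope bound.
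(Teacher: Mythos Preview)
Your argument is correct and in fact slightly more direct than the paper's. Both proofs begin the same way, reducing to the case where $E$ lies on countably many $n$-dimensional Lipschitz graphs of slope strictly less than $1$ (the paper phrases this via $\CC^1$ manifolds, you via approximate tangent planes; these are equivalent standard reductions). The difference is in how the existence of principal values is established on each such graph.

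The paper first applies the Lebesgue--Radon--Nikodym decomposition $\nu=f\HH^n_E+\nu_s$, handles the absolutely continuous part via Theorem~\ref{4unif rectif teo3b}$(b)$, and for the singular part $\nu_s$ invokes the argument of \cite[Theorem~20.27]{Mattila-llibre} with $T^*$ replaced by $\VV_\rho\circ\TT$ and Theorem~\ref{4unif rectif teo3} supplying the weak-type bound. That classical argument is a density/approximation scheme: one exploits the mutual singularity to peel off a large piece of $\nu_s$ supported away from $E$, where the truncated integrals stabilize trivially, and controls the remainder by the weak $(1,1)$ estimate.

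You bypass both the decomposition and the density argument by observing that Theorem~\ref{4unif rectif teo3} applies to an arbitrary $\nu\in M(\R^d)$, so the weak-type bound immediately forces $\VV_\rho(\TT)\nu<\infty$ $\HH^n_{\Gamma_k}$-a.e., and finiteness of the $\rho$-variation alone already implies that $\epsilon\mapsto T_\epsilon\nu(x)$ is Cauchy as $\epsilon\searrow0$. This is a genuine simplification: the Mattila-type density step is tailored to situations where one only has control of the maximal function $T^*$, which by itself does not yield convergence; the variational bound is strictly stronger and makes that step unnecessary. The paper's route is not wrong, just more than is needed here.
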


Given an $n$-rectifiable set $E\subset\R^d$ with $\HH^n(E)<\infty$, as far as the author knows, the existence $\HH^n_E$-a.e. of $\lim_{\epsilon\searrow0}T_\epsilon\nu(x)$ for $\nu\in M(\R^d)$ was already known for odd kernels $K\in\CC^\infty(\R^d\setminus\{0\})$ satisfying 
\begin{equation}\label{pppp}
|\nabla^j K(x)|\leq C_j|x|^{-n-j}
\end{equation} 
for {\em all} $j=0,1,2,3,\ldots$, or maybe assuming (\ref{pppp}) only for a finite but big number of $j$'s (see \cite[Theorems 20.15 and 20.27, Remarks 20.16 and 20.19]{Mattila-llibre} and the references therein). However, the result is new if one only asks (\ref{pppp}) for $j=0,1,2$, and so Corollary \ref{coro pppp} improves on previous results.

The plan of the paper is the following: In Section \ref{s_preliminaries} we state some preliminary results concerning a Calder\'on-Zygmund decomposition of general measures and about the Hausdorff measure of a Lipschitz graph on annuli. The proof of Theorem \ref{4unif rectif teo3} is given in Section \ref{4sec unif rectif teo3bis}, and in Section \ref{teorema Lp no suau s2} we prove Theorem \ref{4unif rectif teo3b}$(c)$. Finally, in Section \ref{s endsec} we complete the proof of Theorem \ref{4unif rectif teo3b} and we also prove Corollary \ref{coro pppp}.

\begin{remarko}
{\em We will only give the proof of Theorems \ref{4unif rectif teo3b} and \ref{4unif rectif teo3} for $\VV_\rho$, because the case of $\OO$ follows by very similar arguments and computations. The details are left for the reader.}
\end{remarko}

As usual, in the paper the letter `$C$' stands
for some constant which may change its value at different
occurrences, and which quite often only depends on $n$ and $d$. The notation $A\lesssim B$ ($A\gtrsim B$) means that
there is some constant $C$ such that $A\leq CB$ ($A\geq CB$),
with $C$ as above. Also, $A\approx B$ is equivalent to $A\lesssim B\lesssim A$.

\section{Preliminaries}\label {s_preliminaries}
\subsection{Calder\'{o}n-Zygmund decomposition for general measures}\label{4secdob}
Given a cube $Q$ in $\R^d$, we denote by $\ell(Q)$ the side length of $Q$. In this paper, the cubes are assumed to be closed and to have sides parallel to the coordinate axes.
Given $\nu\in M(\R^d)$, $ a >1$ and $ b > a ^n$, we say that a cube $Q$
is $( a , b )$-$|\nu|$-doubling if $|\nu|( a  Q)\leq b |\nu|(Q)$, where
$ a  Q$ is the cube concentric with $Q$ with side length $ a \ell(Q)$.
For definiteness, if $ a $ and $ b $ are not specified, by a doubling cube
we mean a $(2,2^{d+1})$-$|\nu|$-doubling cube. 

The following two lemmas are already known (see \cite{Tolsa10}, \cite{Tolsa9}, or \cite{Tolsa11} for example), but since they are essential in this paper, we give their proof for completeness. 

\begin{lema}\label{4lemcubdob}
Let $ b > a ^d$. If $\nu$ is a Radon measure in $\R^d$,  then for $\nu$-a.e. $x\in\R^d$ there
exists a sequence of $( a , b )$-$|\nu|$-doubling cubes $\{Q_k\}_k$ centered
at $x$ with $\ell(Q_k)\to0$ as $k\to\infty$.
\end{lema}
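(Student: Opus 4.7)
The plan is to argue by contradiction. Let $E$ denote the set of points $x\in\R^d$ where the conclusion fails, i.e., for which there exists some $\epsilon(x)>0$ such that every cube $Q$ centered at $x$ with $\ell(Q)\le\epsilon(x)$ satisfies $|\nu|(aQ)>b|\nu|(Q)$. Suppose, for contradiction, that $|\nu|(E)>0$. Writing $E=\bigcup_{m\ge 1}E_m$ with $E_m:=\{x\in E:\epsilon(x)\ge 1/m\}$, I would fix $m$ and a compact $K\subset E_m$ with $|\nu|(K)>0$.

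The next step is to iterate the non-doubling condition on cubes centered at points of $K$. Fix $x\in K$, set $r_0:=1/m$, and for each integer $j\ge 0$ let $Q_j(x)$ be the cube centered at $x$ with side length $a^{-j}r_0$. Since $aQ_{j+1}(x)=Q_j(x)$ and $\ell(Q_{j+1}(x))\le r_0$, the hypothesis at $x$ yields $|\nu|(Q_j(x))>b|\nu|(Q_{j+1}(x))$, so by iteration
\begin{equation*}
|\nu|(Q_j(x))<b^{-j}|\nu|(Q_0(x))\le b^{-j}M,
\end{equation*}
where $M:=|\nu|\bigl(\bigcup_{x\in K}Q_0(x)\bigr)$ is finite since $\bigcup_{x\in K}Q_0(x)$ is bounded and $|\nu|$ is Radon.

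To turn this into a contradiction, I would cover $K$ by the family of axis-aligned grid cubes of side length $a^{-j}r_0$, and let $\RR_j$ be the subfamily of those that meet $K$. A standard packing bound gives $\#\RR_j\lesssim_{K,m} a^{jd}$. For each $R\in\RR_j$, picking $x_R\in R\cap K$ gives $R\subset Q(x_R,2a^{-j}r_0)\subset Q_{j-k}(x_R)$ for a fixed $k=\lceil\log_a 2\rceil$, so the iteration yields $|\nu|(R)\lesssim b^{-j}M$ with implicit constants depending only on $a,b,d$. Summing,
\begin{equation*}
|\nu|(K)\le\sum_{R\in\RR_j}|\nu|(R)\lesssim a^{jd}\cdot b^{-j}M=\left(\frac{a^d}{b}\right)^{\!j}M\;\longrightarrow\;0
\end{equation*}
as $j\to\infty$, because $b>a^d$. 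This contradicts $|\nu|(K)>0$.

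The only (minor) technical subtlety is the mismatch between the cubes centered at points of $E_m$ that appear in the non-doubling hypothesis and the generic grid cubes used in the covering; this is resolved by enlarging each grid cube $R$ to a cube of twice the side length centered at a chosen point of $R\cap K$, at the cost of absorbing a constant factor $b^k$ into the estimate.
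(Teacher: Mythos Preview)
Your proof is correct and follows essentially the same scheme as the paper's: stratify the bad set by the threshold scale, iterate the non-doubling condition to get $|\nu|(Q_j(x))\le b^{-j}M$, cover by roughly $a^{jd}$ small cubes, and let $(a^d/b)^j\to 0$. The only cosmetic difference is that the paper covers via Besicovitch's theorem on a fixed cube $Q_0$ while you use a grid covering over a compact $K\subset E_m$; both yield the same cardinality bound and the same contradiction.
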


\begin{proof}[{\bf {\em Proof.}}]
Let $Z\subset\R^d$ be the set of points $x$ such that there
does not exist a sequence of $( a , b )$-$|\nu|$-doubling cubes $\{Q_k\}_{k\geq0}$ centered
at $x$ with side length decreasing to $0$; and let $Z_j\subset\R^d$ be the set of points $x$
such that there does not exist any $( a , b )$-$|\nu|$-doubling cube $Q$ centered
at $x$ with $\ell(Q)\leq 2^{-j}$. Clearly, $Z=\bigcup_{j\geq0}Z_j$. Thus, proving the lemma
is equivalent to showing that $\nu(Z_j)=0$ for every $j\geq0$.

Let $Q_0$ be a fixed cube with side length $2^{-j}$ and let $k\geq1$ be some integer.
For each $z\in Q_0\cap Z_j$, let $Q_z$ be a cube
centered at $z$ with side length $ a ^{-k}\ell(Q_0)$. Since the cubes $ a ^{h}Q_z$ are not
$( a , b )$-$|\nu|$-doubling for $h=0,\ldots,k-1$ and $ a ^kQ_z\subset 2Q_0$, we have
\begin{equation}\label{4eq112}
\nu(Q_z)\leq  b ^{-1}\nu( a  Q_z)\leq \cdots \leq  b ^{-k}\nu( a ^k Q_z)
\leq  b ^{-k}\nu(2Q_0).
\end{equation}

By Besicovitch's theorem, there exists a subfamily $\{z_m\}_m\subset Q_0\cap Z_j$ such that
$Q_0\cap Z_j\subset\bigcup_m Q_{z_m}$ and moreover  $\sum_{m}\chi_{Q_{z_m}}\leq P_d$. This is a finite family and the number $N$ of points $z_m$
can be easily bounded above as follows: if $\LL$ stands for the Lebesgue measure on $\R^d$,
$$N\,( a ^{-k}\ell(Q_0))^d =\sum_{m=1}^N \LL(Q_{z_m}) \leq P_d\LL(2Q_0) = P_d(2\ell(Q_0))^d.$$
Thus, $N\leq P_d2^d a ^{kd}.$
As a consequence, since $\{Q_{z_m}\}_{1\leq m\leq N}$ covers $Q_0\cap Z_j$,
by (\ref{4eq112}),
$$\nu(Q_0\cap Z_j)\leq \sum_{m=1}^N \nu(Q_z)
\leq  N  b ^{-k}\nu(2Q_0)\leq P_d2^d a ^{kd}
 b ^{-k}\nu(2Q_0).$$
Since $ b > a ^d$, the right hand side tends to $0$ as $k\to\infty$. Therefore
$\nu(Q_0\cap Z_j)=0$, and since the cube $Q_0$ is arbitrary, we are done.
\end{proof}

\begin{lema}[Calder\'{o}n-Zygmund decomposition]\label{4lema CZ}
Assume that $\mu:=\HH^n_{\Gamma\cap B}$, where $\Gamma$ is an $n$-dimensional Lipschitz graph and $B\subset\R^d$ is some fixed ball. For every $\nu\in M(\R^d)$ with compact support and every $\lambda>2^{d+1}\|\nu\|/\|\mu\|$, we have:
\begin{itemize}
\item[$(a)$] There exists a finite or countable collection of almost disjoint cubes $\{Q_j\}_j$ (that is, $\sum_j\chi_{Q_j}\leq C$) and a function $f\in L^1(\mu)$ such that
\begin{gather}
|\nu|(Q_j)>2^{-d-1}\lambda\mu(2Q_j),\label{4lema CZ 1}\\
|\nu|(\eta Q_j)\leq2^{-d-1}\lambda\mu(2\eta Q_j)\quad\text{for }\eta>2,\label{4lema CZ 2}\\
\nu=f\mu\text{ in }\R^d\setminus\Omega\text{ with }|f|\leq\lambda\;\,\mu\text{-a.e},\text{ where }\Omega={\textstyle\bigcup_j}Q_j.\label{4lema CZ 3}
\end{gather}
\item[$(b)$]For each $j$, let $R_j:=6Q_j$ and denote $w_j:=\chi_{Q_j}\big(\sum_k\chi_{Q_k}\big)^{-1}$. Then, there exists a family of functions $\{b_j\}_j$ with $\supp b_j\subset R_j$ and with constant sign satisfying
\begin{gather}
\int b_j\,d\mu=\int w_j\,d\nu,\label{4lema CZ 4}\\
\| b_j\|_{L^\infty(\mu)}\mu(R_j)\leq C|\nu|(Q_j),\text{ and}\label{4lema CZ 5}\\
{\textstyle\sum_j}|b_j|\leq C_0\lambda\quad
\text{(where $C_0$ is some absolute constant).}\label{4lema CZ 6}
\end{gather}
\end{itemize}
\end{lema}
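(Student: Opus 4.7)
For part (a), I would run the standard Calder\'on--Zygmund stopping-time construction adapted to the non-doubling measure $\mu$, as in Tolsa's work. For each $x \in \R^d$ consider the cubes $Q$ centered at $x$ satisfying $|\nu|(Q) > 2^{-d-1}\lambda\mu(2Q)$. The hypothesis $\lambda > 2^{d+1}\|\nu\|/\|\mu\|$ guarantees that this ``bad'' condition fails whenever $2Q$ contains the support of $\mu$, because then $|\nu|(Q)\leq \|\nu\| < 2^{-d-1}\lambda\|\mu\| \leq 2^{-d-1}\lambda\mu(2Q)$. Consequently, at each bad point the supremum of admissible side lengths is finite. At every bad $x$ I would select a near-maximal bad cube $Q_x$ (side length at least half the supremum): (\ref{4lema CZ 1}) is immediate and (\ref{4lema CZ 2}) follows from the failure of the bad condition on every strict dilation $\eta Q_x$ with $\eta>2$. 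Besicovitch's covering theorem then extracts an almost-disjoint subfamily $\{Q_j\}$ still covering the bad set $\Omega$. For (\ref{4lema CZ 3}), Lemma \ref{4lemcubdob} applied to $\mu$ gives, for $\mu$-a.e.\ $x\notin\Omega$, arbitrarily small $(2,2^{d+1})$-$\mu$-doubling cubes $Q$ centered at $x$; combining their non-bad character with $\mu$-doubling yields $|\nu|(Q)\leq \lambda\mu(Q)$, and standard differentiation theory then gives $\nu|_{\R^d\setminus\Omega}=f\mu$ with $|f|\leq\lambda$ $\mu$-a.e.

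For part (b), I would set $R_j:=6Q_j$ and exploit (\ref{4lema CZ 2}) with $\eta=3$ to obtain the key quantitative estimate
\[
|\nu|(Q_j) \;\leq\; |\nu|(3Q_j) \;\leq\; 2^{-d-1}\lambda\,\mu(R_j).
\]
I would look for $b_j$ of the form $\alpha_j\chi_{A_j}$ for subsets $A_j\subseteq R_j$ to be chosen, with $\alpha_j$ forced by (\ref{4lema CZ 4}) via $\alpha_j\mu(A_j)=\int w_j\,d\nu$, which makes (\ref{4lema CZ 4}) and the constant-sign condition automatic. Since $|\alpha_j|\mu(A_j)\leq |\nu|(Q_j)$, both (\ref{4lema CZ 5}) and the auxiliary bound $|\alpha_j|\lesssim \lambda$ (coming from the displayed inequality) will follow as soon as the subsets are chosen to satisfy $\mu(A_j)\gtrsim \mu(R_j)$.

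The main obstacle is the simultaneous construction of subsets $A_j\subseteq R_j$ with $\mu(A_j)\gtrsim\mu(R_j)$ \emph{and} bounded overlap $\sum_j\chi_{A_j}\lesssim 1$ (so that $\sum_j|b_j|\leq(\max_j|\alpha_j|)\sum_j\chi_{A_j}\lesssim \lambda$, giving (\ref{4lema CZ 6})). The difficulty is that although $\{Q_j\}$ is almost disjoint, the dilated family $\{R_j\}=\{6Q_j\}$ can overlap uncontrollably, so naive choices like $A_j=R_j$ fail. To overcome this I would order the cubes by decreasing side length and extract the $A_j$'s greedily from $R_j$ after removing the parts previously allocated to larger-scale cubes, using the $n$-AD regularity of $\mu=\HH^n_{\Gamma\cap B}$ (which is immediate from $\Gamma$ being a Lipschitz graph and forces $\mu(R)\approx\ell(R)^n$ for cubes $R$ centered near $\Gamma\cap B$ of moderate size) to control the loss of $\mu$-mass at each step; here the almost-disjointness of $\{Q_j\}$ is what prevents too many larger-scale cubes from crowding near $Q_j$ and depleting $R_j$.
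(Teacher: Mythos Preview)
Your plan for part~(a) matches the paper's, with one slip: you apply Lemma~\ref{4lemcubdob} to $\mu$, whereas the paper applies it to $|\nu|$. Your version only produces the bound $|\nu|(Q)\le\lambda\mu(Q)$ along small cubes at $\mu$-a.e.\ points of $\R^d\setminus\Omega$, which does not by itself exclude a $\mu$-singular piece of $\nu$ sitting on a $\mu$-null subset of $\R^d\setminus\Omega$. Using $(2,2^{d+1})$-$|\nu|$-doubling cubes $P_k$ instead gives, at $|\nu|$-a.e.\ $x\notin H$, the estimate $|\nu|(2P_k)\le 2^{d+1}|\nu|(P_k)\le\lambda\mu(2P_k)$, and then Lebesgue--Radon--Nikodym differentiation directly yields $\chi_{\R^d\setminus\Omega}\nu=f\mu$ with $|f|\le\lambda$.

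For part~(b) your approach diverges from the paper's and, as stated, has a genuine gap. You order the cubes by \emph{decreasing} side length and take $A_j\subset R_j$ greedily by removing the parts already allocated to larger cubes. But a small $R_j$ can lie entirely inside a single larger $R_i$ processed earlier, so after the greedy removal $A_j$ may be empty; almost-disjointness of $\{Q_j\}$ does not help here, since it bounds the overlap of the $Q_j$'s, not of the dilates $R_j=6Q_j$, and one large $R_i$ can swallow arbitrarily many small $R_j$'s. The paper proceeds in the opposite direction: it orders by \emph{non-decreasing} side length and sets $A_k:=\{x\in R_k:\sum_{i<k}|b_i(x)|\le 2C_2\lambda\}$. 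Because the previously treated $R_i$ are now \emph{smaller}, those meeting $R_k$ satisfy $R_i\subset 3R_k$; hence $\sum_i\int|b_i|\,d\mu\le\sum_i|\nu|(Q_i)\lesssim|\nu|(3R_k)\lesssim\lambda\mu(R_k)$ by~(\ref{4lema CZ 2}) and the doubling of $\mu$ on cubes meeting $\Gamma$, and Chebyshev gives $\mu(A_k)\ge\tfrac12\mu(R_k)$. Note finally that the paper never achieves bounded overlap of the $A_j$'s; it establishes the weighted inequality $\sum_j|b_j|\le C_0\lambda$ directly by induction on $k$, which is all that~(\ref{4lema CZ 6}) requires.
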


\begin{proof}[{\bf{\em Proof of} Lemma \ref{4lema CZ}$(a)$}]
Let $H$ be the set of those points from $\supp\mu\cup\supp\nu$ such that there exists
some cube $Q$ centered at $x$ satisfying
$|\nu|(Q)>2^{-d-1}\lambda\mu(2Q).$
For each $x\in H$, let $Q_x$ be a cube centered at $x$ such that the preceding inequality holds
for $Q_x$ but fails for the cubes $Q$ centered at $x$ with $\ell(Q)>2\ell(Q_x)$.
Notice that the condition $\lambda>2^{d+1}\,\|\nu\|/\|\mu\|$ guaranties the existence of $Q_x$.

Since $H$ is bounded (because $\mu$ and $\nu$ are compactly supported), we can apply Besicovitch's covering theorem
to get a finite or countable almost disjoint subfamily of cubes
$\{Q_j\}_j\subset\{Q_x\}_{x\in H}$ which cover $H$ and satisfy (\ref{4lema CZ 1}) and (\ref{4lema CZ 2})
by construction.

To prove (\ref{4lema CZ 3}), denote by
$Z$ be the set of points $y\in\supp\nu$ such there does not exist a sequence of
$(2,2^{d+1})$-$|\nu|$-doubling cubes centered at $y$ with side length tending to $0$, so that
$|\nu|(Z)=0$, by Lemma \ref{4lemcubdob}. By the definitions of $H$ and $Z$,
for every $x\in \supp\nu\setminus (H\cup Z)$, there exists a sequence of
$(2,2^{d+1})$-$|\nu|$-doubling cubes $P_k$ centered at $x$, with $\ell(P_k)\to0$, such that
$|\nu|(P_k)\leq2^{-d-1}\lambda\mu(2P_k),$ and thus
$|\nu|(2P_k)\leq 2^{d+1}|\nu|(P_k)\leq\lambda\mu(2P_k).$
This implies that $\chi_{\R^d\setminus(H\cup Z)}\nu$ is absolutely continuous with respect to
$\mu$ and that $\chi_{\R^d\setminus H}\nu=\chi_{\R^d\setminus(H\cup Z)}\nu=f\mu$ with $|f|\leq\lambda$ $\mu$-a.e., by the Lebesgue-Radon-Nikodym theorem (see \cite[pages 36 to 39]{Mattila-llibre}, for instance).
\end{proof}

\begin{proof}[{\bf{\em Proof of} Lemma \ref{4lema CZ}$(b)$}] 
Assume first that the family of cubes $\{Q_j\}_j$ is finite.
Then we may suppose that this family of cubes is ordered in such a way
that the sizes of the cubes $R_j$ are non decreasing (i.e. $\ell(R_{j+1})\geq\ell(R_j)$).
The functions $b_j$ that we will construct will be of the form $b_j=c_j\,\chi_{A_j}$, with $c_j\in\R$ and $A_j\subset R_j$. We set $A_1=R_1$ and
$b_1 := c_1\,\chi_{R_1},$
where the constant $c_1$ is chosen so that $\int_{Q_1}w_1\,d\nu=\int b_1\,d\mu$.

Suppose that $b_1,\ldots,b_{k-1}$ have been constructed,
satisfy (\ref{4lema CZ 4}) and $\sum_{j=1}^{k-1} |b_j|\leq
C_0\,\lambda,$  where $C_0$ is some constant which will be fixed below.
Let $R_{s_1},\ldots,R_{s_m}$ be the subfamily of
$R_1,\ldots,R_{k-1}$ such that $R_{s_i}\cap R_k \neq \emptyset$.
As $\ell(R_{s_i}) \leq \ell(R_k)$ (because of the non decreasing sizes of $R_j$),
we have $R_{s_i} \subset 3R_k$. Taking into account that
$\int |b_j|\,d\mu \leq |\nu|(Q_j)$ for $j=1,\ldots,k-1$
by (\ref{4lema CZ 4}), and using (\ref{4lema CZ 2}) and that $\mu(6R_k)\leq C\mu(R_k)$ (because $\frac{1}{2}R_k=3Q_k$ intersects $\supp\mu$ by (\ref{4lema CZ 2})), we get
\begin{align*}
\sum_i\int|b_{s_i}|\,d\mu\leq\sum_i|\nu|(Q_{s_i})
\leq C|\nu|(3R_k)\leq C\lambda\mu(6R_k)\leq C_2\lambda\mu(R_k).
\end{align*}
Therefore,
$\mu\left\{x\in R_k\,:\,{\textstyle \sum_i}|b_{s_i}(x)|>2C_2\lambda\right\}\leq
\mu(R_k)/2.$ So, if we set
$$A_k:=\left\{x\in R_k\,:\,{\textstyle\sum_i}|b_{s_i}(x)|\leq
2C_2\lambda\right\},$$ then $\mu(A_k)\geq\mu(R_k)/2.$

The constant $c_k$ is chosen so that for $b_k=c_k\chi_{A_k}$
we have $\int b_k\,d\mu=\int_{Q_k}w_k\,d\nu$.  Then we obtain, by (\ref{4lema CZ 2}),
$$|c_k|\leq\frac{|\nu|(Q_k)}{\mu(A_k)}
\leq\frac{2|\nu|(\frac{1}{2}R_k)}{\mu(R_k)}\leq C_3\lambda$$
(this calculation also applies to $k=1$). Thus,
$|b_k|+\sum_{i} |b_{s_i}| \leq (2C_2+C_3)\,\lambda.$
If we choose $C_0=2C_2+C_3$, (\ref{4lema CZ 6}) follows.

Now it is easy to check that (\ref{4lema CZ 5}) also holds. Indeed we have
$$\|b_j\|_{L^\infty(\mu)}\mu(R_j)\leq C|c_j|\mu(A_j)
=C\bigg|\int_{Q_j}w_j\,d\nu\bigg|\leq C|\nu|(Q_j).$$

Suppose now that the collection of cubes $\{Q_j\}_j$ is not finite.
For each fixed $N$ we consider the family of cubes $\{Q_j\}_{1\leq j \leq N}$.
Then, as above, we construct functions $b_1^N,\ldots,b_N^N$ with
$\supp(b_j^N)\subset R_j$ satisfying
$\int b_j^N\,d\mu=\int_{Q_j}w_j\,d\nu,$
$\sum_{j=1}^N|b_j^N|\leq C_0\,\lambda$
and
$\|b_j^N\|_{L^\infty(\mu)}\mu(R_j)\leq C|\nu|(Q_j).$
Notice that the sign of $b_j^N$ equals the sign of $\int w_j\,d\nu$ and
so it does not depend on $N$.

Then there is a subsequence
$\{b_1^k\}_{k\in I_1}$ which is convergent in the weak $\ast$ topology of
$L^\infty(\mu)$ to some function
$b_1\in L^\infty(\mu)$. Now we can consider a subsequence
$\{b_2^k\}_{k\in I_2}$ with $I_2\subset I_1$ which
is also convergent in the weak $\ast$ topology of $L^\infty(\mu)$ to some
function $b_2\in L^\infty(\mu)$.
In general, for each $j$ we consider a subsequence
$\{b_j^k\}_{k\in I_j}$ with $I_j\subset I_{j-1}$ that converges
in the weak $\ast$ topology of $L^\infty(\mu)$ to some function
$b_j\in L^\infty(\mu)$. It is easily checked that the functions
$b_j$ satisfy the required properties.
\end{proof}

\subsection{Hausdorff measure of Lipschitz graphs on annuli}\label{ss pendent petita}
Given $z\in\R^d$ and $0<a\leq b$, let $A(z,a,b)\subset\R^d$ denote the closed annulus centered at $z$ and with inner radius $a$ and outer radius $b$.
This subsection is devoted to the proof of the following lemma, which yields a key estimate to derive Theorems \ref{4unif rectif teo3b} and \ref{4unif rectif teo3}.
\begin{lema}\label{lema pendent petita3}
Let $\Gamma:=\{x\in \R^d\,:\,x=(y,\A(y)),\, y\in\R^n\}$ be the graph of a Lipschitz function $\A:\R^n\to\R^{d-n}$ such that $\Lip(\A)<1$. Then, there exists $C>0$ depending on $n$, $d$, and $\Lip(\A)$, such that $\HH^n_\Gamma(A(z,a,b))\leq C(b-a)b^{n-1}$ for all $z\in\Gamma$ and all $0<a\leq b$.
\end{lema}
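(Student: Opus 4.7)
By translation invariance we may assume $z = 0 \in \Gamma$, so $\A(0) = 0$; set $L := \Lip(\A) < 1$. Let $\phi : \R^n \to \Gamma$ be the graph parametrization $\phi(y) = (y, \A(y))$. The singular values of $D\A$ are bounded by $L$ a.e., so the Jacobian $J\phi(y) = \sqrt{\det(I + D\A(y)^T D\A(y))} \le (1+L^2)^{n/2}$ a.e. By the area formula,
\[
\HH^n_\Gamma(A(0,a,b)) = \int_S J\phi(y)\,dy \le (1+L^2)^{n/2}\,\LL^n(S), \qquad S := \{y \in \R^n : a \le |\phi(y)| \le b\},
\]
so it suffices to show $\LL^n(S) \le C(b-a)b^{n-1}$ for some $C = C(n,d,L)$.

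I would next set up polar coordinates $y = r\omega$, $r \ge 0$, $\omega \in S^{n-1}$, and for each $\omega$ study
\[
f_\omega(r) := |\phi(r\omega)| = \sqrt{r^2 + |\A(r\omega)|^2},
\]
which satisfies $r \le f_\omega(r) \le \sqrt{1+L^2}\,r$. The key point is that for a.e.\ $\omega \in S^{n-1}$ (Rademacher plus Fubini) the function $\A$ is differentiable at $r\omega$ for a.e.\ $r > 0$, and there
\[
(f_\omega^2)'(r) = 2r + 2\langle \A(r\omega), D\A(r\omega)\omega\rangle \ge 2r - 2L\cdot Lr = 2(1-L^2)\,r,
\]
using $|\A(r\omega)| \le Lr$ and $|D\A(r\omega)\omega| \le L$. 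Combined with $f_\omega(r) \le \sqrt{1+L^2}\,r$, this gives
\[
f_\omega'(r) = \frac{(f_\omega^2)'(r)}{2 f_\omega(r)} \ge \frac{1-L^2}{\sqrt{1+L^2}} =: c > 0.
\]
This is the only place the hypothesis $L < 1$ enters, and it is clearly the heart of the argument: if $L \ge 1$ then $c$ collapses and the method breaks down.

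Since $f_\omega$ is then strictly increasing on $(0,\infty)$, the preimage $\{r > 0 : f_\omega(r) \in [a,b]\}$ is a (possibly empty) single interval $[r_1, r_2]$ satisfying $r_2 - r_1 \le (b-a)/c$ (integrate the lower bound) and $r \le f_\omega(r_2) = b$ throughout. Writing the Lebesgue measure in polar coordinates,
\begin{align*}
\LL^n(S) &= \int_{S^{n-1}} \int_{r_1(\omega)}^{r_2(\omega)} r^{n-1}\,dr\,d\omega\\
&\le |S^{n-1}|\,b^{n-1}\,\frac{b-a}{c},
\end{align*}
and inserting this into the area-formula inequality yields the claim with $C = (1+L^2)^{n/2}|S^{n-1}|/c$. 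The only genuinely delicate step is the pointwise radial derivative lower bound; everything else is bookkeeping.
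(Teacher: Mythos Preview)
Your argument is correct and considerably more direct than the paper's. The paper proceeds via an auxiliary geometric lemma (Lemma~\ref{lema pendent petita1}): it introduces the ``radial straightening'' map $\Upsilon(x)=\frac{|x|}{|x_H|}\,x_H+x_V$, proves through a somewhat intricate case analysis that $\Upsilon$ is bilipschitz on the cone $\{|x_V|\le L|x_H|\}$ and that $\Upsilon(\Gamma)$ is again contained in a Lipschitz graph, and then observes that $\Upsilon$ sends the annulus $A(0,a,b)$ into the slab $\{a\le|x_H|\le b\}$, whose intersection with a Lipschitz graph has measure $\lesssim (b-a)b^{n-1}$.

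You bypass all of this by working directly in the parameter domain: after the area formula, the radial monotonicity estimate $f_\omega'(r)\ge(1-L^2)/\sqrt{1+L^2}$ immediately forces the preimage of $[a,b]$ along each ray to be a single interval of controlled length. This is more elementary and makes the role of the hypothesis $L<1$ completely transparent. One small simplification: you do not actually need Rademacher plus Fubini for the full differential $D\A$; it suffices that for each fixed $\omega$ the map $r\mapsto\A(r\omega)$ is $L$-Lipschitz, hence a.e.\ differentiable with derivative of norm at most $L$, and then your bound on $(f_\omega^2)'$ follows directly. The paper's approach, on the other hand, yields as a by-product the bilipschitz change of variables $\Upsilon$, which could in principle be reused elsewhere, but for the lemma at hand your route is cleaner.
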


We need the following auxiliary result.
\begin{lema} \label{lema pendent petita1}
Let $1\leq n<d$. For $x:=(x_1,\ldots,x_d)\in\R^d$ we denote $$x_H:=(x_1,\ldots,x_n,0,\ldots,0)\in\R^d\quad\text{and}\quad
x_V:=(0,\ldots,0,x_{n+1},\ldots,x_d)\in\R^d.$$
Given $x,y\in\R^d\setminus\{0\}$, if there exists $0<s<1$ such that $|x_V|\leq s|x_H|$, $|y_V|\leq s|y_H|$, and $|x_V-y_V|\leq s|x_H-y_H|$, then there exists $C>0$ depending only on $s$ such that
\begin{equation}\label{pendent cota}
|x_V-y_V|\leq C\bigg|\frac{|x|}{|x_H|}\,x_H-\frac{|y|}{|y_H|}\,y_H\bigg|.
\end{equation}
\end{lema}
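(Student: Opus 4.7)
I would set $a := |x_H|$, $b := |y_H|$, $u := |x|/|x_H|$, $v := |y|/|y_H|$, and let $\theta \in [0,\pi]$ denote the angle between $x_H$ and $y_H$. The hypotheses force $x_H, y_H \neq 0$, so $a, b > 0$, and clearly $u, v \in [1, \sqrt{1+s^2}]$. With $\phi(x) := u\,x_H$ and $\phi(y) := v\,y_H$, the right-hand side of (\ref{pendent cota}) is $|\phi(x) - \phi(y)|$, and my plan is to prove the stronger estimate $|\phi(x) - \phi(y)| \geq c_s\,|x_H - y_H|$ for some $c_s>0$ depending only on $s$; combined with the hypothesis $|x_V - y_V| \leq s|x_H - y_H|$, this gives (\ref{pendent cota}) with $C = s/c_s$.

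The starting point is the identity
\[
|\phi(x) - \phi(y)|^2 \;=\; |x-y|^2 \,-\, 2(uv - 1)\,ab\cos\theta \,+\, 2\,x_V \cdot y_V,
\]
obtained by expanding both sides and using $u^2 a^2 = a^2 + |x_V|^2$ and $v^2 b^2 = b^2 + |y_V|^2$. When $\cos\theta \leq 0$, both $-2(uv-1)ab\cos\theta \geq 0$ and $|x_V \cdot y_V| \leq s^2 ab$; since $|x_H - y_H|^2 \geq a^2+b^2 \geq 2ab$ in this regime, one obtains $|\phi(x)-\phi(y)|^2 \geq (1-s^2)|x-y|^2 \geq (1-s^2)|x_V - y_V|^2$, which is stronger than what is needed.

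For the case $\cos\theta > 0$, I would split further using the threshold $\kappa := (1+s^2)/2$, which lies strictly between $s^2$ and $1$ exactly when $s<1$. If $|a - b| \geq \kappa |x_H - y_H|$, then from $|x|^2 - |y|^2 = (a+b)(a-b) + (|x_V|^2 - |y_V|^2)$, estimating $\bigl||x_V|^2 - |y_V|^2\bigr| \leq (|x_V|+|y_V|)\,|x_V-y_V| \leq s^2(a+b)|x_H-y_H|$ yields $\bigl||x|^2 - |y|^2\bigr| \geq (\kappa - s^2)(a+b)|x_H-y_H|$. Dividing by $|x|+|y| \leq \sqrt{1+s^2}(a+b)$ and using $|\phi(x)-\phi(y)| \geq \bigl||x|-|y|\bigr|$ yields the desired bound. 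If instead $|a-b| < \kappa |x_H - y_H|$, the decomposition $|x_H-y_H|^2 = (a-b)^2 + 2ab(1-\cos\theta)$ forces $2ab(1-\cos\theta) > (1-\kappa^2)|x_H-y_H|^2$; combined with $|\phi(x)-\phi(y)|^2 \geq 2|x||y|(1-\cos\theta) \geq 2ab(1-\cos\theta)$, this concludes.

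The main obstacle I anticipate is that the ``spherical projection'' $z \mapsto (|z|/|z_H|)\,z_H$ is not bi-Lipschitz on the horizontal cone $\{|z_V| \leq s|z_H|\}$: for example, $x=(1,0,s)$ and $y=(1,s,0)$ in $\R^3$ (with $n=1$) both map to $(\sqrt{1+s^2},0,0)$ even though $x\neq y$. The third hypothesis $|x_V - y_V| \leq s|x_H - y_H|$, which forces the difference $x-y$ to lie in the same cone, is precisely what rules out such collapses; identifying the right threshold $\kappa = (1+s^2)/2$ to couple the two subcases of $\cos\theta > 0$ is the crux of the computation.
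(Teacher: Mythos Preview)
Your argument is correct, and it takes a genuinely different route from the paper's proof. Both proofs hinge on a case split governed by the size of $\bigl||x_H|-|y_H|\bigr|$ relative to $|x_H-y_H|$, but the threshold and the treatment of the ``near-radial'' case differ. The paper uses a threshold $(1-\delta)$ with a small unspecified $\delta$, and when $\bigl||x_H|-|y_H|\bigr|>(1-\delta)|x_H-y_H|$ it runs a longer geometric argument: it bounds the angle $\gamma(-x,y-x)$ away from $\pi/2$, then applies the law of cosines in the triangle with side lengths $|x|,|y|,|y-x|$ to control $|y-x|$ by $|y|-|x|$; the case $|x|/|y|\notin[1/2,2]$ is also handled separately. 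Your approach avoids all of this: with the explicit threshold $\kappa=(1+s^2)/2$, the near-radial regime $|a-b|\geq\kappa|x_H-y_H|$ is dispatched directly via the estimate $\bigl||x|-|y|\bigr|\gtrsim|x_H-y_H|$, and the complementary regime follows immediately from the identity $|x_H-y_H|^2=(a-b)^2+2ab(1-\cos\theta)$. Your argument is shorter and more elementary, and it actually yields the stronger conclusion $|\phi(x)-\phi(y)|\geq c_s\,|x_H-y_H|$ (horizontal bi-Lipschitzness of the radial scaling on the cone), which is precisely the content used downstream in Lemma~\ref{lema pendent petita3}. The paper's route, on the other hand, makes the role of the angle between $x$ and $y-x$ explicit, which gives slightly more geometric intuition for why $s<1$ is the sharp threshold (cf.\ Remark~\ref{rem499}).
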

\begin{proof}[{\bf {\em Proof.}}]
We set $\Phi(x,y):=\big||x||x_H|^{-1}x_H-|y||y_H|^{-1}y_H\big|$. Since $\Phi$ is symmetric in $x$ and $y$, we can assume that $|x_H|\leq|y_H|$. If $\langle\cdot,\cdot\rangle$ denotes the scalar product in $\R^d$, using the polarization identity,
\begin{equation*}
\begin{split}
\Phi(x,&y)^2=|x|^2+|y|^2-2|x||x_H|^{-1}|y||y_H|^{-1}\langle x_H,y_H\rangle\\
&=|x|^2+|y|^2+|x||x_H|^{-1}|y||y_H|^{-1}\big(|x_H-y_H|^2-|x_H|^2-|y_H|^2\big)\\
&=|x|^2+|y|^2-2|x||y|+|x||x_H|^{-1}|y||y_H|^{-1}
\big(|x_H-y_H|^2-|x_H|^2-|y_H|^2+2|x_H||y_H|\big)\\
&=\big(|x|-|y|\big)^2+|x||x_H|^{-1}|y||y_H|^{-1}
\big(|x_H-y_H|^2-(|x_H|-|y_H|)^2\big).
\end{split}
\end{equation*}
Since $|x_H-y_H|^2-(|x_H|-|y_H|)^2\geq0$, $|x_H|\leq|x|$, and $|y_H|\leq|y|$, we have
\begin{equation}\label{pendent1}
\begin{split}
\Phi(x,y)^2\geq\big(|x|-|y|\big)^2+|x_H-y_H|^2-(|x_H|-|y_H|)^2.
\end{split}
\end{equation}

Assume that $2|x|\leq|y|$. Then, using (\ref{pendent1}),
\begin{equation*}
\begin{split}
|x_V-y_V|\leq|x|+|y|\leq\frac{3}{2}|y|=3\Big(|y|-\frac{1}{2}\,|y|\Big)
\leq3(|y|-|x|)\leq3\Phi(x,y),
\end{split}
\end{equation*}
and we obtain (\ref{pendent cota}). By the same arguments, if $2|y|\leq|x|$, then
$|x_V-y_V|\leq3\Phi(x,y)$ and (\ref{pendent cota}) holds. Thus, from now on we assume $\frac{1}{2}|x|\leq|y|\leq2|x|$.

Let $0<\delta<1$ be a small number that will be fixed below. Assume that
$(1-\delta)|x_H-y_H|\geq\big||y_H|-|x_H|\big|$. Then, by (\ref{pendent1}),
\begin{equation*}
\begin{split}
\Phi(x,y)^2&\geq|x_H-y_H|^2-(|x_H|-|y_H|)^2
\geq|x_H-y_H|^2-(1-\delta)^2|x_H-y_H|^2\\
&=\delta(2-\delta)|x_H-y_H|^2\geq\delta(2-\delta) s^{-2}|x_V-y_V|^2,
\end{split}
\end{equation*}
and then (\ref{pendent cota}) holds with $C=s/\sqrt{\delta(2-\delta)}$.

Therefore, we can suppose that $(1-\delta)|x_H-y_H|\leq\big||y_H|-|x_H|\big|=|y_H|-|x_H|$, since we are also assuming $|x_H|\leq|y_H|$.
If we set $z=y-x$, we have $(1-\delta)|z_H|\leq|x_H+z_H|-|x_H|$, so
$(1-\delta)|z_H|+|x_H|\leq|x_H+z_H|$. Hence,
\begin{equation*}
\begin{split}
(1-\delta)^2|z_H|^2+|x_H|^2+2(1-\delta)|z_H||x_H|
&=\big((1-\delta)|z_H|+|x_H|\big)^2\\
&\leq|x_H+z_H|^2
=|x_H|^2+|z_H|^2+2\langle x_H,z_H\rangle
\end{split}
\end{equation*}
and we obtain
\begin{equation}\label{pendent2}
\begin{split}
\langle x_H,z_H\rangle\geq-\frac{1}{2}\,\delta(2-\delta)|z_H|^2+(1-\delta)|z_H||x_H|.
\end{split}
\end{equation}

Using (\ref{pendent2}), that $\langle x_V,z_V\rangle\geq-|x_V||z_V|$, and that $|x_V|\leq s|x_H|$ and $|z_V|\leq s|z_H|$, we get
\begin{equation}\label{pendent3}
\begin{split}
\langle x, z\rangle&=\langle x_H+x_V, z_H+z_V\rangle=\langle x_H, z_H\rangle+\langle x_V,z_V\rangle\\
&\geq-\frac{1}{2}\,\delta(2-\delta)|z_H|^2+(1-\delta)|z_H||x_H|-|x_V||z_V|\\
&\geq-\frac{1}{2}\,\delta(2-\delta)|z_H|^2+(1-\delta-s^2)|z_H||x_H|.
\end{split}
\end{equation}
Notice that, if $\delta>0$ is small enough depending on $s$, then
$-\frac{1}{4}\,(1-s^2)(1+s^2)^{-1}<-\frac{3}{2}\,\delta(2-\delta)<0$ and
$1-\delta-s^2>\frac{1}{2}\,(1-s^2)$. Let $\gamma(x,z)$ be the angle
between $x$ and $z$ (by definition, $0\leq\gamma(x,z)\leq\pi$). Using that $\langle x,
z\rangle=|x||z|\cos(\gamma(x,z))$, that $|x|\leq\sqrt{1+s^2}|x_H|$ and $|z|\leq\sqrt{1+s^2}|z_H|$, and that
$|z|\leq|x|+|y|\leq3|x|$, we finally obtain from (\ref{pendent3})
that
\begin{equation*}
\begin{split}
\cos(\gamma(x,z))&\geq-\frac{1}{2}\,\delta(2-\delta)|z_H|^2|x|^{-1}|z|^{-1}+(1-\delta-s^2)|z_H||x_H||x|^{-1}|z|^{-1}\\
&\geq-\frac{3}{2}\,\delta(2-\delta)+(1-\delta-s^2)(1+s^2)^{-1}\geq\frac{1}{4}\,(1-s^2)(1+s^2)^{-1}=:a.
\end{split}
\end{equation*}

Notice that $a>0$, because $0<s<1$ by hypothesis. Hence, since $\cos(\gamma(-x,y-x))=\cos(\gamma(-x,z))=-\cos(\gamma(x,z))$ (because $z=y-x$ and $\langle-x,z\rangle=-\langle x,z\rangle$), we have $c_0:=\cos(\gamma(-x,y-x))\leq -a<0$ (notice that $c_0\leq0$ implies that $|x|\leq|y|$). By the cosines theorem, $|y|^2=|x|^2-|y-x|^2-2|x||y-x|c_0$. Since $c_0<0$, we solve the second degree equation in $|y-x|$ and we obtain
\begin{equation*}
\begin{split}
|y-x|&=\sqrt{|y|^2-|x|^2(1-c_0^2)}-|x||c_0|
=\frac{|y|^2-|x|^2(1-c_0^2)-|x|^2c_0^2}{\sqrt{|y|^2-|x|^2(1-c_0^2)}+|x||c_0|}\\
&=\frac{(|y|-|x|)(|y|+|x|)}{\sqrt{|y|^2-|x|^2(1-c_0^2)}+|x||c_0|}
\leq\frac{(|y|-|x|)(|y|+|x|)}{|x||c_0|}\leq(|y|-|x|)\frac{3}{a},
\end{split}
\end{equation*}
where we also used that $|y|\leq2|x|$ in the last inequality. Therefore, by (\ref{pendent1}),
$$|x_V-y_V|\leq|x-y|\leq\frac{3}{a}\,(|y|-|x|)\leq\frac{3}{a}\,\Phi(x,y),$$
and (\ref{pendent cota}) follows with $C=3/a$, where $a>0$ only depends on $s$. This completes the proof of the lemma.
\end{proof}

\begin{proof}[{\bf {\em Proof} of Lemma \ref{lema pendent petita3}}]
We keep the notation introduced in Lemma \ref{lema pendent petita1}. Fix $z\in\Gamma$. We can assume that $z=0$, by taking a translation of $\Gamma$ if it is necessary.

For $x\in\R^d$ with $x_H\neq0$, consider the map 
$$\Upsilon(x):=\frac{|x|}{|x_H|}\,x_H+x_V
=\sqrt{1+\frac{|x_V|^2}{|x_H|^2}}\,x_H+x_V.$$ 
It is not difficult to show that $\Upsilon$ is a bilipschitz mapping from (a neighborhood of) the cone $$L:=\{x\in\R^d\setminus\{0\}\,:\,|x_V|\leq\Lip(\A)|x_H|\}$$ to (a neighborhood of) the cone $$L':=\{x\in\R^d\setminus\{0\}\,:\,|x_V|\leq\Lip(\A)(1+\Lip(\A)^2)^{-1/2}|x_H|\},$$ whose inverse equals $$\Upsilon^{-1}(x)=\sqrt{1-\frac{|x_V|^2}{|x_H|^2}}\,x_H+x_V.$$  
Moreover, when $\Upsilon$ and $\Upsilon^{-1}$ are restricted to $L$ and $L'$ respectively, $\Lip(\Upsilon)$ and $\Lip(\Upsilon^{-1})$ only depend on $n$, $d$, and $\Lip(\A)$. Hence, since $\Gamma\subset L\cup\{0\}$, for any $0<a\leq b$ we have $$\HH^n_\Gamma(A(0,a,b))=\HH^n(\Gamma\cap A(0,a,b))\approx\HH^n(\Upsilon(\Gamma\cap A(0,a,b))).$$

Consider the set $\Upsilon(\Gamma)$. Since $\Gamma$ has slope smaller than $1$ (i.e. $\Lip(\A)<1$), by Lemma \ref{lema pendent petita1} there exists a constant $C>0$ depending only on $n$, $d$, and $\Lip(\A)$ such that for any two points $x,y\in\Upsilon(\Gamma)$ one has $|x_V-y_V|\leq C|x_H-y_H|$. Then, it is known that $\Upsilon(\Gamma)$ is contained in the $n$-dimensional graph $\Gamma'$ of some Lipschitz function (see for example the proof of \cite[Lemma 15.13]{Mattila-llibre}). Notice also that, given $0<a\leq b$, $\Upsilon(L\cap A(0,a,b))\subset\{x\in\R^d:\,a\leq|x_H|\leq b\}$. Therefore,
\begin{equation*}
\begin{split}
\HH^n_\Gamma(A(0,a,b))&\approx\HH^n(\Upsilon(\Gamma\cap A(0,a,b)))
\leq\HH^n(\Gamma'\cap\{x\in\R^d:\,a\leq|x_H|\leq b\})\lesssim(b-a)b^{n-1},
\end{split}
\end{equation*}
and the lemma is proved.
\end{proof}

\begin{remarko}{\em 
With a little more of effort, one can show that $\Upsilon(\Gamma)$ is actually a Lipschitz graph. We omit the details.}
\end{remarko}

\begin{remarko}\label{rem499}{\em
Lemma \ref{lema pendent petita3} is sharp in the sense that the estimate fails if $\Lip(\A)\geq1$ (notice that the constant $C$ in Lemma \ref{lema pendent petita1} for $s=\Lip(\A)$ is bigger than $(1+\Lip(\A)^2)/(1-\Lip(\A)^2)$). Given $\epsilon>0$, one can easily construct a Lipschitz graph $\Gamma$ such that $1<\Lip(\A)<1+\epsilon$ and such that, for some $z\in\Gamma$ and $r>0$, $\Gamma$ contains a set $P\subset\partial B(z,r)$ with $\HH^n_\Gamma(P)>0$. Then, if Lemma \ref{lema pendent petita3} were true for $\Gamma$, we would have
$0<\HH^n_\Gamma(P)\leq\HH^n_\Gamma(A(z,r-\delta,r+\delta))\lesssim2\delta(r+\delta)^{n-1}$, and we would have a contradiction by letting $\delta\to0$.
By a similar argument, one can also show that the lemma fails in the limiting case $\Lip(\A)=1$.}
\end{remarko}

\section{$\VV_\rho\circ\TT$ is a bounded operator from $M(\R^d)$ to $L^{1,\infty}(\HH^n_\Gamma)$}\label{4sec unif rectif teo3bis}

This section is devoted to the proof of Theorem \ref{4unif rectif teo3}, which is based on a nontrivial modification of the proof of \cite[Theorem B]{CJRW-singular integrals} using the Calder\'{o}n-Zygmund decomposition developed in Subsection \ref{4secdob}. 

\begin{proof}[{\bf {\em Proof of} Theorem \ref{4unif rectif teo3}}]
Set $\mu:=\HH^n_{\Gamma\cap B}$, where $B$ is some fixed ball in $\R^d$. Let $\nu\in M(\R^d)$ be a finite complex Radon measure with compact support and $\lambda>2^{d+1}\|\nu\|/\|\mu\|$. We will show that
\begin{equation}\label{4cota mesuresL1debil}
\mu\big(\big\{x\in\R^d\,:\,(\VV_\rho\circ\TT)\nu(x)>\lambda\big\}\big)\leq\frac{C}{\lambda}\,\|\nu\|,
\end{equation}
where $C>0$ depends on $n$, $d$, $K$, $\rho$ and $\Gamma$, but not on $B\subset\R^d$. Let us check that (\ref{4cota mesuresL1debil}) implies that $\VV_\rho\circ\TT$ is bounded from $M(\R^d)$ into $L^{1,\infty}(\HH^n_\Gamma)$. 
Suppose that $\nu$ is not compactly supported. Set $\nu_N = \chi_{B(0,N)}\,\nu$.
Let $N_0$ be such that $\supp\mu\subset B(0,N_0)$. Then it is not hard to show that, for $x\in\supp\mu$,
$$|(\VV_\rho\circ\TT)\nu(x)-(\VV_\rho\circ\TT)\nu_N(x)|\leq C\,\frac{|\nu|(\R^d\setminus B(0,N))}{N-N_0},$$
thus $(\VV_\rho\circ\TT)\nu_N(x)\to (\VV_\rho\circ\TT)\nu(x)$ for all $x\in \supp\mu$ uniformly, and since the estimate (\ref{4cota mesuresL1debil}) 
 holds by assumption
for $\nu_N$, letting $N\to\infty$, we deduce that it also holds for $\nu$. Now, by increasing the size of the ball $B$ and monotone convergence, (\ref{4cota mesuresL1debil}) yields 
$$\HH^n_\Gamma\big(\big\{x\in\R^d\,:\,(\VV_\rho\circ\TT)\nu(x)>\lambda\big\}\big)\leq \frac{C}{\lambda}\,\|\nu\|,$$ as desired. Thus, we only have to verify (\ref{4cota mesuresL1debil}) for all compactly supported $\nu$.

Let $\{Q_j\}_j$ be the almost disjoint
family of cubes of Lemma \ref{4lema CZ}, and set $\Omega:=\bigcup_jQ_j$ and $R_j:=6Q_j$.
Then we can write $\nu=g\mu+\nu_b$, with
$$g\mu=\chi_{\R^d\setminus\Omega}\nu+ \sum_j b_j\mu\quad\text{and}\quad
\nu_b=\sum_j\nu_b^j:=\sum_j\left(w_j\nu-b_j\mu\right),$$
where the functions $b_j$ satisfy (\ref{4lema CZ 4}), (\ref{4lema CZ 5}), and (\ref{4lema CZ 6}), and
$w_j=\chi_{Q_j}\big(\sum_k \chi_{Q_k}\big)^{-1}$.

By the subadditivity of $\VV_\rho\circ\TT$, we have
\begin{equation}\label{4cota mesuresL1debil 2}
\begin{split}
\mu\big(\big\{x&\in\R^d\,:\,(\VV_\rho\circ\TT)\nu(x)>\lambda\big\}\big)\\
&\leq\mu\big(\big\{x\in\R^d\,:\,(\VV_\rho\circ\TT^\mu)g(x)>\lambda/2\big\}\big)
+\mu\big(\big\{x\in\R^d\,:\,(\VV_\rho\circ\TT)\nu_b(x)>\lambda/2\big\}\big).
\end{split}
\end{equation}

Since $\VV_\rho\circ\TT^{\HH^n_\Gamma}$ is bounded in $L^2(\HH^n_\Gamma)$ by Theorem \ref{4main theorem lip}, it is easy to show that $\VV_\rho\circ\TT^{\mu}$ is bounded in $L^2(\mu)$, with a bound independent of $B$. Notice that $|g|\leq C\lambda$ by (\ref{4lema CZ 3}) and (\ref{4lema CZ 6}). Then, using (\ref{4lema CZ 5}),
\begin{equation}\label{4cota mesuresL1debil 3}
\begin{split}
\mu\big(\big\{x\in\R^d\,:\,(\VV_\rho\circ\TT^\mu)g(x)>\lambda/2\big\}\big)
&\lesssim\frac{1}{\lambda^{2}}\int|(\VV_\rho\circ\TT^\mu)g|^2\,d\mu
\lesssim\frac{1}{\lambda^{2}}\int|g|^2\,d\mu\\
&\lesssim\frac{1}{\lambda}\int|g|\,d\mu
\leq\frac{1}{\lambda}\bigg(|\nu|(\R^d\setminus\Omega)+\sum_j\int_{R_j}|b_j|\,d\mu\bigg)\\
&\leq\frac{1}{\lambda}\bigg(|\nu|(\R^d\setminus\Omega)+\sum_j|\nu|(Q_j)\bigg)\leq\frac{C}{\lambda}\,\|\nu\|.
\end{split}
\end{equation}

Set $\wih\Omega:=\bigcup_j2Q_j$. By (\ref{4lema CZ 1}), we have
$\mu(\wih\Omega)\leq\sum_j\mu(2Q_j)\lesssim\lambda^{-1}\sum_j|\nu|(Q_j)\lesssim\lambda^{-1}\|\nu\|$. We are going to show that
\begin{equation}\label{4cota mesuresL1debil 1}
\mu\big(\big\{x\in\R^d\setminus\wih\Omega\,:\,(\VV_\rho\circ\TT)\nu_b(x)>\lambda/2\big\}\big)\leq\frac{C}{\lambda}\,\|\nu\|,
\end{equation}
and then (\ref{4cota mesuresL1debil}) is a direct consequence of (\ref{4cota mesuresL1debil 2}), (\ref{4cota mesuresL1debil 3}), (\ref{4cota mesuresL1debil 1}) and the estimate $\mu(\wih\Omega)\lesssim\lambda^{-1}\|\nu\|$.

For simplicity of notation, given $0<\epsilon\leq\delta$ and $t\in\R^d$, we set $\chi_\epsilon^\delta(t):=\chi_{(\epsilon,\delta]}(|t|),$ so 
$$T_\epsilon\nu_b(x)-T_\delta\nu_b(x)=\int\chi_\epsilon^\delta(x-y)K(x-y)\,d\nu_b(y)=(K\chi_\epsilon^\delta*\nu_b)(x).$$
Given $x\in\supp\mu$, let $\{\epsilon_m\}_{m\in\Z}$ be a decreasing sequence of positive numbers (which depends on $x$, i.e. $\epsilon_m\equiv\epsilon_m(x)$) such that
\begin{equation}\label{4cota mesuresL1debil 4b}
(\VV_\rho\circ\TT)\nu_b(x)
\leq2\bigg(\sum_{m\in\Z}|(K\chi_{\epsilon_{m+1}}^{\epsilon_m}*\nu_b)(x)|^\rho\bigg)^{1/\rho}.
\end{equation}
If $R_j\cap A(x,\epsilon_{m+1},\epsilon_m)=\emptyset$ then $(K\chi_{\epsilon_{m+1}}^{\epsilon_m}*\nu_b^j)(x)=0$, so by (\ref{4cota mesuresL1debil 4b}) and the triangle inequality,
\begin{equation*}
\begin{split}
(\VV_\rho\circ\TT)\nu_b(x)
&\leq2\bigg(\sum_{m\in\Z}\bigg|\sum_{j:\,R_j\subset A(x,\epsilon_{m+1},\epsilon_m)}
(K\chi_{\epsilon_{m+1}}^{\epsilon_m}*\nu_b^j)(x)\bigg|^\rho\bigg)^{1/\rho}\\
&\quad+2\bigg(\sum_{m\in\Z}\bigg|\sum_{j:\,R_j\cap\partial A(x,\epsilon_{m+1},\epsilon_m)\neq\emptyset}
(K\chi_{\epsilon_{m+1}}^{\epsilon_m}*\nu_b^j)(x)\bigg|^\rho\bigg)^{1/\rho}\\
&=:2\big(IS(x)+BS(x)\big),
\end{split}
\end{equation*}
and then,
\begin{equation}\label{4cota mesuresL1debil 5b}
\begin{split}
\mu\big(\big\{x\in\R^d&\setminus\wih\Omega\,:\,(\VV_\rho\circ\TT)\nu_b(x)>\lambda/2\big\}\big)\\
&\leq\mu\big(\big\{x\in\R^d\setminus\wih\Omega\,:\,IS(x)>\lambda/8\big\}\big)
+\mu\big(\big\{x\in\R^d\setminus\wih\Omega\,:\,BS(x)>\lambda/8\big\}\big).
\end{split}
\end{equation}

Let us estimate first $\mu\big(\big\{x\in\R^d\setminus\wih\Omega\,:\,IS(x)>\lambda/8\big\}\big)$.
Since the $\ell^\rho$-norm is not bigger than the $\ell^1$-norm for $\rho\geq1$,
\begin{equation}\label{4cota mesuresL1debil 9b}
\begin{split}
IS(x)&\leq\sum_{m\in\Z}\bigg|\sum_{j:\,R_j\subset A(x,\epsilon_{m+1},\epsilon_m)}
(K\chi_{\epsilon_{m+1}}^{\epsilon_m}*\nu_b^j)(x)\bigg|\\
&\leq\sum_{m\in\Z}\,\sum_{j:\,R_j\subset A(x,\epsilon_{m+1},\epsilon_m)}
\bigg|\int\chi_{\epsilon_{m+1}}^{\epsilon_m}(x-y)K(x-y)\,d\nu_b^j(y)\bigg|\\
&=\sum_{j}\sum_{m\in\Z:\,A(x,\epsilon_{m+1},\epsilon_m)\supset R_j}
\bigg|\int\chi_{\epsilon_{m+1}}^{\epsilon_m}(x-y)K(x-y)\,d\nu_b^j(y)\bigg|\\
&\leq\sum_{j}\chi_{\R^d\setminus R_j}(x)\bigg|\int K(x-y)\,d\nu_b^j(y)\bigg|.
\end{split}
\end{equation}
Notice that
\begin{equation}\label{4cota mesuresL1debil 8b}
\begin{split}
\int_{\R^d\setminus\wih\Omega}&\chi_{\R^d\setminus R_j}(x)\bigg|\int K(x-y)\,d\nu_b^j(y)\bigg|\,d\mu(x)
\leq\int_{\R^d\setminus R_j}\bigg|\int K(x-y)\,d\nu_b^j(y)\bigg|\,d\mu(x)\\
&\leq\int_{\R^d\setminus2R_j}\bigg|\int K(x-y)\,d\nu_b^j(y)\bigg|\,d\mu(x)
+\int_{2R_j\setminus R_j}\bigg|\int K(x-y)\,d\nu_b^j(y)\bigg|\,d\mu(x).
\end{split}
\end{equation}
On one hand, by (\ref{4lema CZ 5}) and using the $L^2(\mu)$ boundedness of the maximal operator $T_*^\mu$ (recall that $\mu=\HH^n_{\Gamma\cap B}$, where $\Gamma$ is a Lipschitz graph and $B$ is a ball) and that $\mu(2R_j)\leq C\mu(R_j)$ (because $\frac{1}{2}R_j\cap\supp\mu\neq\emptyset$), we get
\begin{equation}\label{4cota mesuresL1debil 6b}
\begin{split}
\int_{2R_j\setminus R_j}\bigg|\int K(x-y)b_j(y)\,d\mu(y)\bigg|\,d\mu(x)
&\leq\int_{2R_j\setminus R_j}T_*^\mu b_j\,d\mu\\
&\leq\bigg(\int_{2R_j}(T_*^\mu b_j)^2\,d\mu\bigg)^{1/2}\mu(2R_j)^{1/2}
\\&\lesssim\|b_j\|_{L^2(\mu)}\mu(2R_j)^{1/2}
\lesssim\|b_j\|_{L^\infty(\mu)}\mu(R_j)\\
&\lesssim|\nu|(Q_j).
\end{split}
\end{equation}
On the other hand, since $\supp w_j\subset Q_j=\frac{1}{6}R_j$ and $|w_j|\leq1$, if $x\in2R_j\setminus R_j$ we have
$\int|K(x-y)w_j(y)|\,d|\nu|(y)\lesssim|\nu|(Q_j)|x-z_j|^{-n}$, where $z_j$ denotes the center of $R_j$. Hence, using again that $\mu(2R_j)\leq C\mu(R_j)\leq C\ell(R_j)^n$,
\begin{equation}\label{4cota mesuresL1debil 7b}
\begin{split}
\int_{2R_j\setminus R_j}\bigg|\int K(x-y)w_j(y)\,d\nu(y)\bigg|\,d\mu(x)
&\leq\int_{2R_j\setminus R_j}\int |K(x-y)w_j(y)|\,d|\nu|(y)\,d\mu(x)\\
&\lesssim|\nu|(Q_j)\int_{2R_j\setminus R_j}|x-z_j|^{-n}\,d\mu(x)\\
&\lesssim|\nu|(Q_j)\ell(R_j)^{-n}\mu(2R_j)\lesssim|\nu|(Q_j).
\end{split}
\end{equation}

Since $\nu_b^j(R_j)=0$, $\supp\nu_b^j\subset R_j$, and $\|\nu_b^j\|\lesssim|\nu|(Q_j)$ by (\ref{4lema CZ 5}), we have
\begin{equation*}
\begin{split}
\int_{\R^d\setminus 2R_j}\bigg|\int K(x-y)\,d\nu_b^j(y)\bigg|\,d\mu(x)
&\leq\int_{\R^d\setminus 2R_j}\int_{R_j}|K(x-y)-K(x-z_j)|\,d|\nu_b^j|(y)\,d\mu(x)\\
&\lesssim\int_{\R^d\setminus 2R_j}\int_{R_j}\frac{|y-z_j|}{|x-z_j|^{n+1}}\,d|\nu_b^j|(y)\,d\mu(x)\\
&\lesssim\|\nu_b^j\|\int_{\R^d\setminus 2R_j}\frac{\ell(R_j)}{|x-z_j|^{n+1}}\,d\mu(x)\lesssim\|\nu_b^j\|
\lesssim|\nu|(Q_j).
\end{split}
\end{equation*}

Combining this last estimate with (\ref{4cota mesuresL1debil 6b}), (\ref{4cota mesuresL1debil 7b}), and the fact that $\nu_b^j=w_j\nu-b_j\mu$, from (\ref{4cota mesuresL1debil 8b}) we obtain that
$$\int_{\R^d\setminus\wih\Omega}\chi_{\R^d\setminus R_j}(x)\bigg|\int K(x-y)\,d\nu_b^j(y)\bigg|\,d\mu(x)
\lesssim|\nu|(Q_j).$$
Finally, using (\ref{4cota mesuresL1debil 9b}) we conclude
\begin{equation}\label{4cota mesuresL1debil 15b}
\begin{split}
\mu\big(\big\{x\in\R^d\setminus\wih\Omega\,:\,IS(x)>\lambda/8\big\}\big)
&\leq\frac{8}{\lambda}\int_{\R^d\setminus\wih\Omega}IS(x)\,d\mu(x)\\
&\leq\frac{8}{\lambda}\sum_j\int_{\R^d\setminus\wih\Omega}\chi_{\R^d\setminus R_j}(x)\bigg|\int K(x-y)\,d\nu_b^j(y)\bigg|\,d\mu(x)\\
&\leq\frac{C}{\lambda}\sum_j|\nu|(Q_j)\leq\frac{C}{\lambda}\|\nu\|.
\end{split}
\end{equation}

Let us estimate $\mu\big(\big\{x\in\R^d\setminus\wih\Omega\,:\,BS(x)>\lambda/8\big\}\big)$. Recall that $\epsilon_m\equiv\epsilon_m(x)$. We define
\begin{equation}\label{4cota mesuresL1debil defi1b}
\begin{split}
\psi^j_m(x)&:=\left\{\begin{array}{ll}
1&\mbox{if $R_j\cap\partial A(x,\epsilon_{m+1}(x),\epsilon_m(x))\neq\emptyset$}\\
0&\mbox{if not}\end{array}\right.,\text{ and}\\
\theta^j_k(x)&:=\left\{\begin{array}{ll}
1&\mbox{if $R_j\cap\partial A(x,2^{-k-1},2^{-k})\neq\emptyset$}\\
0&\mbox{if not}\end{array}\right..
\end{split}
\end{equation}
Then, by the triangle inequality, for $x\in\R^d\setminus\wih\Omega$ we have
\begin{equation}\label{4cota mesuresL1debil 10b}
\begin{split}
BS(x)&=\bigg(\sum_{m\in\Z}\bigg|\sum_{j}\psi^j_m(x)
(K\chi_{\epsilon_{m+1}}^{\epsilon_m}*\nu_b^j)(x)\bigg|^\rho\bigg)^{1/\rho}\\
&\leq\bigg(\sum_{m\in\Z}\bigg|\sum_{j}\chi_{\R^d\setminus 2R_j}(x)\psi^j_m(x)(K\chi_{\epsilon_{m+1}}^{\epsilon_m}*\nu_b^j)(x)\bigg|^\rho\bigg)^{1/\rho}\\
&\quad+\bigg(\sum_{m\in\Z}\bigg|\sum_{j}\chi_{2R_j\setminus 2Q_j}(x)\psi^j_m(x)(K\chi_{\epsilon_{m+1}}^{\epsilon_m}*\nu_b^j)(x)\bigg|^\rho\bigg)^{1/\rho}\\
&\leq\bigg(\sum_{m\in\Z}\bigg|\sum_{j}\chi_{\R^d\setminus 2R_j}(x)\psi^j_m(x)(K\chi_{\epsilon_{m+1}}^{\epsilon_m}*\nu_b^j)(x)\bigg|^\rho\bigg)^{1/\rho}\\
&\quad+\sum_{j}\chi_{2R_j\setminus 2Q_j}(x)\bigg(\sum_{m\in\Z}\big|(K\chi_{\epsilon_{m+1}}^{\epsilon_m}*\nu_b^j)(x)\big|^\rho\bigg)^{1/\rho}\\
&=:BS_1(x)+BS_2(x).
\end{split}
\end{equation}
Notice that $BS_2(x)\leq\sum_{j}\chi_{2R_j\setminus 2Q_j}(x)(\VV_\rho\circ\TT)\nu_b^j(x)$. Since $\rho\geq1$, for $x\in2R_j\setminus2Q_j$,
\begin{equation*}
\begin{split}
(\VV_\rho\circ\TT)\nu_b^j(x)
&\leq(\VV_\rho\circ\TT)(w_j\nu)(x)+(\VV_\rho\circ\TT)(b_j\mu)(x)\\
&\leq(\VV_1\circ\TT)(w_j\nu)(x)+(\VV_\rho\circ\TT^\mu)b_j(x)\\
&\lesssim|\nu|(Q_j)|x-z_j|^{-n}+(\VV_\rho\circ\TT^\mu)b_j(x),
\end{split}
\end{equation*}
where $z_j$ denotes the center of $Q_j$ (and $R_j$).
Then, similarly to (\ref{4cota mesuresL1debil 6b}) and (\ref{4cota mesuresL1debil 7b}) but using now the $L^2(\mu)$ boundedness of $\VV_\rho\circ\TT^\mu$ given by Theorem \ref{4main theorem lip}, we have
\begin{equation}\label{4cota mesuresL1debil 17b}
\begin{split}
\mu\big(\big\{x\in\R^d\setminus&\wih\Omega\,:\,BS_2(x)>\lambda/16\big\}\big)
\leq\frac{16}{\lambda}\int_{\R^d\setminus\wih\Omega}BS_2\,d\mu\\
&\leq\frac{16}{\lambda}\int\sum_{j}\chi_{2R_j\setminus 2Q_j}(\VV_\rho\circ\TT)\nu_b^j\,d\mu
=\frac{16}{\lambda}\sum_{j}\int_{2R_j\setminus 2Q_j}(\VV_\rho\circ\TT)\nu_b^j\,d\mu\\
&\lesssim\frac{1}{\lambda}\sum_j|\nu|(Q_j)\int_{2R_j\setminus2Q_j}|x-z_j|^{-n}\,d\mu(x)
+\frac{1}{\lambda}\sum_{j}\int_{2R_j\setminus 2Q_j}(\VV_\rho\circ\TT^\mu)b_j\,d\mu\\
&\lesssim\frac{1}{\lambda}\sum_j|\nu|(Q_j)\ell(Q_j)^{-n}\mu(2R_j)
+\frac{1}{\lambda}\sum_{j}\|(\VV_\rho\circ\TT^\mu)b_j\|_{L^2(\mu)}\mu(2R_j)^{1/2}\\
&\lesssim\frac{1}{\lambda}\sum_j|\nu|(Q_j)
+\frac{1}{\lambda}\sum_{j}\|b_j\|_{L^\infty(\mu)}\mu(R_j)
\lesssim\frac{1}{\lambda}\sum_j|\nu|(Q_j)\leq\frac{C}{\lambda}\,\|\nu\|.
\end{split}
\end{equation}
Therefore, to show that $\mu\big(\big\{x\in\R^d\setminus\wih\Omega\,:\,BS(x)>\lambda/8\big\}\big)\leq C\lambda^{-1}\|\nu\|$, by (\ref{4cota mesuresL1debil 10b}) and (\ref{4cota mesuresL1debil 17b}) it is enough to verify that
\begin{equation*}
\mu\big(\big\{x\in\R^d\setminus\wih\Omega\,:\,BS_1(x)>\lambda/16\big\}\big)\leq\frac{C}{\lambda}\,\|\nu\|.
\end{equation*}

Without loss of generality, we can assume from the beginning that, for a given $x\in\supp\mu$, either $[\epsilon_{m+1},\epsilon_m)\subset[2^{-k-1},2^{-k})$ for some $k\in\Z$, or $[\epsilon_{m+1},\epsilon_m)=[2^{-i},2^{-k})$ for some $i>k$ (see \cite[page 2130]{CJRW-singular integrals} for a similar argument).
Thus, if we set $I_k:=[2^{-k-1},2^{-k})$, we can decompose $\Z=\SSS\cup\LL$, where
\begin{equation*}
\begin{split}
&\LL:=\{m\in\Z\,:\,\epsilon_m=2^{-k},\,\epsilon_{m+1}=2^{-i}\text{ for }i>k\},\\
&\SSS:=\bigcup_{k\in\Z}\SSS_k,\quad\SSS_k:=\{m\in\Z\,:\,\epsilon_{m},\epsilon_{m+1}\in I_k\}.
\end{split}
\end{equation*}
Then, since $\rho\geq1$,
\begin{equation*}
\begin{split}
BS_1(x)&\leq\bigg(\sum_{m\in\LL}\bigg|\sum_{j}\chi_{\R^d\setminus 2R_j}(x)\psi^j_m(x)
(K\chi_{\epsilon_{m+1}}^{\epsilon_m}*\nu_b^j)(x)\bigg|^\rho\bigg)^{1/\rho}\\
&\quad+\bigg(\sum_{m\in\SSS}\bigg|\sum_{j}\chi_{\R^d\setminus 2R_j}(x)\psi^j_m(x)
(K\chi_{\epsilon_{m+1}}^{\epsilon_m}*\nu_b^j)(x)\bigg|^\rho\bigg)^{1/\rho}\\
&=:BS_\LL(x)+BS_\SSS(x),
\end{split}
\end{equation*}
and we have
\begin{equation}\label{4cota mesuresL1debil 16b}
\begin{split}
\mu\big(\big\{x\in&\R^d\setminus\wih\Omega\,:\,BS_1(x)>\lambda/16\big\}\big)\\
&\leq\mu\big(\big\{x\in\R^d\setminus\wih\Omega\,:\,BS_\LL(x)>\lambda/32\big\}\big)
+\mu\big(\big\{x\in\R^d\setminus\wih\Omega\,:\,BS_\SSS(x)>\lambda/32\big\}\big).
\end{split}
\end{equation}

We are going to estimate first $\mu\big(\big\{x\in\R^d\setminus\wih\Omega\,:\,BS_\LL(x)>\lambda/32\big\}\big)$.
Given $x\in\R^d\setminus\wih\Omega$ (recall the definitions of $\psi^j_k(x)$ and $\theta^j_k(x)$ in (\ref{4cota mesuresL1debil defi1b})), we have
\begin{equation}\label{BSlx}
\begin{split}
BS_\LL(x)&\leq\sum_{j}\sum_{m\in\LL}\chi_{\R^d\setminus 2R_j}(x)\psi^j_m(x)
|(K\chi_{\epsilon_{m+1}}^{\epsilon_m}*\nu_b^j)(x)|\\
&\leq\sum_{j}\sum_{k\in\Z}\chi_{\R^d\setminus 2R_j}(x)\theta^j_k(x)
|(K\chi_{2^{-k-1}}^{2^{-k}}*\nu_b^j)(x)|\\
&\leq\sum_{j}\sum_{k\in\Z\,:\,2^{-k+1}>\ell(R_j)}\chi_{\R^d\setminus 2R_j}(x)\theta^j_k(x)
|(K\chi_{2^{-k-1}}^{2^{-k}}*\nu_b^j)(x)|,
\end{split}
\end{equation}
where in the second and third inequalities above we used the following facts, respectively:
\begin{itemize}
\item assume $m\in\LL$, $\epsilon_{m+1}=2^{-i}$ and
$\epsilon_m=2^{-i+s}$, with $i\in\Z$ and $s\in\N$. Given $j$ such that $R_j\cap\partial A(x,\epsilon_{m+1},\epsilon_m)\neq\emptyset$, if $R_j\cap A(x,2^{-k-1},2^{-k})\neq\emptyset$ for some $k\in\Z$, then $R_j\cap \partial A(x,2^{-k-1},2^{-k})\neq\emptyset$. 
\item For $x\in\R^d\setminus2R_j$, if $2^{-k+1}\leq\ell(R_j)$ then we have $\supp\chi_{2^{-k-1}}^{2^{-k}}(x-\cdot)\cap R_j=\emptyset$, so $(K\chi_{2^{-k-1}}^{2^{-k}}*\nu_b^j)(x)=0$.
\end{itemize}

Therefore, from (\ref{BSlx}) and since $|(K\chi_{2^{-k-1}}^{2^{-k}}*\nu_b^j)(x)|\lesssim2^{(k+1)n}\|\nu_b^j\|$,
\begin{equation}\label{4cota mesuresL1debil 11b}
\begin{split}
\mu\big(\big\{x\in\R^d\setminus\wih\Omega\,:\,&BS_\LL(x)>\lambda/32\big\}\big)
\leq\frac{32}{\lambda}\int_{\R^d\setminus\wih\Omega}BS_\LL(x)\,d\mu(x)\\
&\leq\frac{32}{\lambda}\sum_{j}\sum_{k\in\Z\,:\,2^{-k+1}>\ell(R_j)}\int_{\R^d\setminus 2R_j}\theta^j_k(x)
|(K\chi_{2^{-k-1}}^{2^{-k}}*\nu_b^j)(x)|\,d\mu(x)\\
&\lesssim\frac{1}{\lambda}\sum_{j}\sum_{k\in\Z\,:\,2^{-k+1}>\ell(R_j)}2^{(k+1)n}\|\nu_b^j\|
\int\theta^j_k(x)\,d\mu(x).
\end{split}
\end{equation}

Let us check that
$\int\theta^j_k(x)\,d\mu(x)\lesssim\ell(R_j)2^{-k(n-1)}.$ Fix $k$ and $j$ such that $2^{-k+1}>\ell(R_j)$, and take $u\in\frac{9}{10}R_j\cap\supp\mu$ (this $u$ exists because of (\ref{4lema CZ 2})).
There exists $a>0$ depending only on $d$ such that $\supp\theta_k^j\subset B(u,2^{-k}a)$; thus, if $\ell(R_j)\geq 2^{-k}b$ for some small constant $b>0$, $\int\theta^j_k\,d\mu\leq\mu(B(u,2^{-k}a))\lesssim2^{-kn}\leq b^{-1}\ell(R_j)2^{-k(n-1)}.$ On the contrary, if $\ell(R_j)<2^{-k}b$ and $b$ is small enough, then $$\supp\theta_k^j\subset A(u,2^{-k}-b'\ell(R_j),2^{-k}+b'\ell(R_j))\cup A(u,2^{-k-1}-b'\ell(R_j),2^{-k-1}+b'\ell(R_j))$$ for some constant $b'>0$ depending on $b$ and $d$ such that $2^{-k-1}-b'\ell(R_j)>0$. In that case, since $u\in\supp\mu$, we have $\int\theta^j_k\,d\mu=\mu(\supp\theta_k^j)\lesssim\ell(R_j)2^{-k(n-1)}$ (because $\mu(A(u,r,R))\lesssim(R-r)R^{n-1}$ for all $0<r\leq R$ by Lemma \ref{lema pendent petita3}, since $\Gamma$ has slope smaller than $1$), as desired.

Using that $\int\theta^j_k\,d\mu\lesssim\ell(R_j)2^{-k(n-1)}$ and that $\|\nu_b^j\|\lesssim|\nu|(Q_j)$ in (\ref{4cota mesuresL1debil 11b}), we conclude
\begin{equation}\label{4cota mesuresL1debil 12b}
\begin{split}
\mu\big(\big\{x\in\R^d\setminus\wih\Omega\,:\,BS_\LL(x)>\lambda/32\big\}\big)
&\lesssim\frac{1}{\lambda}\sum_{j}\sum_{k\in\Z\,:\,2^{-k+1}>\ell(R_j)}2^{(k+1)n}\|\nu_b^j\|\ell(R_j)2^{-k(n-1)}\\
&=\frac{1}{\lambda}\sum_{j}\|\nu_b^j\|\ell(R_j)\sum_{k\in\Z\,:\,2^{-k+1}>\ell(R_j)}2^{n+k}\\
&\lesssim\frac{1}{\lambda}\sum_{j}|\nu|(Q_j)\leq\frac{C}{\lambda}\,\|\nu\|.
\end{split}
\end{equation}

It only remains to show $\mu\big(\big\{x\in\R^d\setminus\wih\Omega\,:\,BS_\SSS(x)>\lambda/32\big\}\big)\leq C\lambda^{-1}\|\nu\|$ to finish the proof of the theorem. We set 
$$\Phi_m^j(x):=\chi_{\R^d\setminus 2R_j}(x)\psi^j_m(x)(K\chi_{\epsilon_{m+1}(x)}^{\epsilon_m(x)}*\nu_b^j)(x).$$ 

Recall that $I_r=[2^{-r-1},2^{-r})$. Since the $\ell^\rho$-norm is not bigger than the $\ell^2$-norm, 
\begin{equation*}
\begin{split}
\mu\big(\big\{x\in\R^d\setminus\wih\Omega\,:\,BS_\SSS(x)&>\lambda/32\big\}\big)
\lesssim\frac{1}{\lambda^2}\int_{\R^d\setminus\wih\Omega}\sum_{m\in\SSS}\bigg|\sum_{j}\Phi_m^j(x)\bigg|^2d\mu(x)\\
&=\frac{1}{\lambda^2}\sum_{k\in\Z}\int_{\R^d\setminus\wih\Omega}\sum_{m\in\SSS_k}\bigg|\sum_{j\,:\,2^{-k+1}>\ell(R_j)}
\Phi_m^j(x)\bigg|^2d\mu(x)\\
&=\frac{1}{\lambda^2}\sum_{k\in\Z}\int_{\R^d\setminus\wih\Omega}\sum_{m\in\SSS_k}\bigg|\sum_{r\in\Z\,:\,r\geq k-1}\,\sum_{j\,:\,\ell(R_j)\in I_r} \Phi_m^j(x)\bigg|^2d\mu(x),
\end{split}
\end{equation*}
and then by Cauchy-Schwarz inequality,
\begin{equation*}
\begin{split}
\mu\big(\big\{x\in&\R^d\setminus\wih\Omega\,:\,BS_\SSS(x)>\lambda/32\big\}\big)\\
&\lesssim\frac{1}{\lambda^2}\sum_{k\in\Z}\int_{\R^d\setminus\wih\Omega}\sum_{m\in\SSS_k}
\bigg(\sum_{\begin{subarray}{c} r\in\Z:\\r\geq k-1\end{subarray}}2^{(k-r)/2}\bigg)
\bigg(\sum_{\begin{subarray}{c} r\in\Z:\\r\geq k-1\end{subarray}}2^{(r-k)/2}\bigg|\sum_{j\,:\,\ell(R_j)\in I_r}
\Phi_m^j(x)\bigg|^2\bigg)d\mu(x)\\
&\lesssim\frac{1}{\lambda^2}\sum_{k\in\Z}\int_{\R^d\setminus\wih\Omega}\sum_{m\in\SSS_k}
\sum_{r\in\Z\,:\,r\geq k-1}2^{(r-k)/2}\bigg|\sum_{j\,:\,\ell(R_j)\in I_r}
\Phi_m^j(x)\bigg|^2d\mu(x).
\end{split}
\end{equation*}
Thus, if we set $P_m^r(x):=\sum_{j\,:\,\ell(R_j)\in I_r}\Phi_m^j(x)$, we have seen that 
\begin{equation}\label{4cota mesuresL1debil 13}
\begin{split}
\mu\big(\big\{x\in\R^d\setminus\wih\Omega\,&:\,BS_\SSS(x)>\lambda/32\big\}\big)
\lesssim\frac{1}{\lambda^2}\sum_{k\in\Z}\int_{\R^d\setminus\wih\Omega}\sum_{m\in\SSS_k}
\sum_{\begin{subarray}{c} r\in\Z:\\r\geq k-1\end{subarray}}2^{(r-k)/2}|P_m^r(x)|^2\,d\mu(x).
\end{split}
\end{equation}

Let us estimate $P_m^r(x)$ for $m\in\SSS_k$ and $r\geq k-1$. Since $\|\nu_b^j\|\lesssim|\nu|(Q_j)\leq|\nu|(3Q_j)\lesssim\lambda\mu(6Q_j)$ by (\ref{4lema CZ 5}) and (\ref{4lema CZ 2}), we have
\begin{equation}\label{4cota mesuresL1debil 18}
\begin{split}
|P_m^r(x)|&\leq\sum_{j\,:\,\ell(R_j)\in I_r}
\chi_{\R^d\setminus 2R_j}(x)\psi^j_m(x)|(K\chi_{\epsilon_{m+1}}^{\epsilon_m}*\nu_b^j)(x)|\\
&\lesssim\sum_{j\,:\,\ell(R_j)\in I_r}
\chi_{\R^d\setminus 2R_j}(x)\psi^j_m(x)2^{kn}\|\nu_b^j\|
\lesssim\sum_{\begin{subarray}{c}j\,:\,6\ell(Q_j)\in I_r,\\
6Q_j\cap\partial A(x,\epsilon_{m+1},\epsilon_m)\neq\emptyset\end{subarray}}2^{kn}\lambda\mu(6Q_j).
\end{split}
\end{equation}

It is not difficult to see that, if $\sum_j\chi_{Q_j}\leq C$ for some $C>0$, then 
$\sum_{j\,:\,\ell(6Q_j)\in I_r}\chi_{6Q_j}\leq C'$ for all $r\in\Z$, where $C'>0$ only depends on $C$ (that is, the family of cubes $\FF:=\{6Q_j\}_{j\,:\,\ell(6Q_j)\in I_r}$ has finite overlap uniformly in $r\in\Z$). 
We set 
$$\Upsilon:=\sum_{\begin{subarray}{c}j\,:\,6\ell(Q_j)\in I_r,\\
6Q_j\cap\partial A(x,\epsilon_{m+1},\epsilon_m)\neq\emptyset\end{subarray}}\chi_{6Q_j}.$$
If $2^{-k}a\leq2^{-r}\leq2^{-k+1}$ for some small constant $a>0$ (recall that we are assuming $r\geq k-1$), then there exists a constant $b>0$ depending only on $d$ and $a$ such that
$\supp\Upsilon\subset B(x,b2^{-k}),$
and then, by the finite overlap of the family $\FF$, 
\begin{equation*}
\begin{split}
\sum_{\begin{subarray}{c}j\,:\,6\ell(Q_j)\in I_r,\\
6Q_j\cap\partial A(x,\epsilon_{m+1},\epsilon_m)\neq\emptyset\end{subarray}}\mu(6Q_j)
&=\int_{B(x,b2^{-k})}\Upsilon\,d\mu\leq C'\mu(B(x,b2^{-k}))
\lesssim2^{-kn}\approx2^{-r}2^{-k(n-1)}.
\end{split}
\end{equation*}
On the contrary, if $2^{-k}a\geq2^{-r}$ for $a$ small enough (depending on $d$), then there exists a constant $b>0$ depending on $d$ and $a$ such that $2^{-k-1}>2^{-r}b$ and
$\supp\Upsilon\subset A(x,\epsilon_m-2^{-r}b,\epsilon_m+2^{-r}b)\cup A(x,\epsilon_{m+1}-2^{-r}b,\epsilon_{m+1}+2^{-r}b),$
and then, since $m\in\SSS_k$, $x\in\supp\mu$ and the slope of $\Gamma$ is smaller than $1$, by Lemma \ref{lema pendent petita3}
we have $\mu(\supp\Upsilon)\leq\mu(A(x,\epsilon_m-2^{-r}b,\epsilon_m+2^{-r}b))
+\mu(A(x,\epsilon_{m+1}-2^{-r}b,\epsilon_{m+1}+2^{-r}b))\lesssim2^{-r}2^{-k(n-1)}$, thus by the 
finite overlap of the family $\FF$,
\begin{equation*}
\begin{split}
\sum_{\begin{subarray}{c}j\,:\,6\ell(Q_j)\in I_r,\\
6Q_j\cap\partial A(x,\epsilon_{m+1},\epsilon_m)\neq\emptyset\end{subarray}}\mu(6Q_j)
&=\int_{\supp\Upsilon}\Upsilon\,d\mu
\lesssim\mu(\supp\Upsilon)\lesssim2^{-r}2^{-k(n-1)}.
\end{split}
\end{equation*}

In any case, from (\ref{4cota mesuresL1debil 18}) we get
$|P_m^r(x)|\lesssim2^{kn}\lambda2^{-r}2^{-k(n-1)}=2^{k-r}\lambda.$
Therefore, 
using (\ref{4cota mesuresL1debil 13}) we obtain that
\begin{equation*}
\begin{split}
\mu\big(\big\{x\in&\R^d\setminus\wih\Omega\,:\,BS_\SSS(x)>\lambda/32\big\}\big)
\lesssim\frac{1}{\lambda}\sum_{k\in\Z}\int_{\R^d\setminus\wih\Omega}\sum_{m\in\SSS_k}
\sum_{r\in\Z\,:\,r\geq k-1}2^{(k-r)/2}|P_m^r(x)|\,d\mu(x)\\
&\leq\frac{1}{\lambda}\sum_{k\in\Z}\int_{\R^d\setminus\wih\Omega}\sum_{m\in\SSS_k}
\sum_{\begin{subarray}{c}r\in\Z:\\r\geq k-1\end{subarray}}2^{(k-r)/2}
\sum_{\begin{subarray}{c}j\,:\,\ell(R_j)\in I_r,\\
R_j\cap\partial A(x,\epsilon_{m+1},\epsilon_m)\neq\emptyset\end{subarray}}
|(K\chi_{\epsilon_{m+1}}^{\epsilon_m}*\nu_b^j)(x)|\,d\mu(x)\\
&\lesssim\frac{1}{\lambda}\sum_{k\in\Z}\int_{\R^d\setminus\wih\Omega}\sum_{m\in\SSS_k}
\sum_{\begin{subarray}{c}r\in\Z:\\r\geq k-1\end{subarray}}2^{(k-r)/2}
\sum_{\begin{subarray}{c}j\,:\,\ell(R_j)\in I_r,\\
R_j\cap A(x,2^{-k-1},2^{-k})\neq\emptyset\end{subarray}}
2^{kn}|\nu_b^j|(A(x,\epsilon_{m+1},\epsilon_m))\,d\mu(x)\\
&\leq\frac{1}{\lambda}\sum_{k\in\Z}\int_{\R^d\setminus\wih\Omega}
\sum_{\begin{subarray}{c}r\in\Z:\\r\geq k-1\end{subarray}}2^{(k-r)/2+kn}
\sum_{\begin{subarray}{c}j\,:\,\ell(R_j)\in I_r,\\
R_j\cap A(x,2^{-k-1},2^{-k})\neq\emptyset\end{subarray}}
|\nu_b^j|(A(x,2^{-k-1},2^{-k}))\,d\mu(x).
\end{split}
\end{equation*}
Hence, if we set 
\begin{equation*}
\begin{split}
\tau^j_k(x):=\left\{\begin{array}{ll}
1&\mbox{if $R_j\cap A(x,2^{-k-1},2^{-k})\neq\emptyset$}\\
0&\mbox{if not}\end{array}\right.,
\end{split}
\end{equation*}
we obtain
\begin{equation*}
\begin{split}
\mu\big(\big\{x\in\R^d\setminus\wih\Omega\,&:\,BS_\SSS(x)>\lambda/32\big\}\big)\\
&\leq\frac{1}{\lambda}\sum_{k\in\Z}\int_{\R^d\setminus\wih\Omega}
\sum_{\begin{subarray}{c}r\in\Z:\\r\geq k-1\end{subarray}}2^{(k-r)/2+kn}
\sum_{j\,:\,\ell(R_j)\in I_r}\|\nu_b^j\|\tau_k^j(x)\,d\mu(x)\\
&=\frac{1}{\lambda}\sum_{k\in\Z}\,
\sum_{\begin{subarray}{c}r\in\Z:\\r\geq k-1\end{subarray}}2^{(k-r)/2+kn}
\sum_{j\,:\,\ell(R_j)\in I_r}\|\nu_b^j\|\int_{\R^d\setminus\wih\Omega}\tau_k^j\,d\mu.
\end{split}
\end{equation*}
Notice that, if $\ell(R_j)\in I_r$ and $r\geq k-1$, then $\ell(R_j)<2^{-k+1}$. Hence, there exists a constant $C>0$ such that $\supp\tau^j_k\subset B(z_j,C2^{-k})$ for all $\ell(R_j)\in I_r$ and all $r\geq k-1$ (recall that $z_j$ is the center of $R_j$), and then 
$\int_{\R^d\setminus\wih\Omega}\tau_k^j\,d\mu\leq\mu(B(z_j,C2^{-k}))\lesssim2^{-kn}$. Therefore, by exchanging the order of summation and using that $\|\nu_b^j\|\lesssim|\nu|(Q_j)$, we finally obtain
\begin{equation}\label{4cota mesuresL1debil 14b}
\begin{split}
\mu\big(\big\{x\in\R^d\setminus&\wih\Omega\,:\,BS_\SSS(x)>\lambda/32\big\}\big)
\lesssim\frac{1}{\lambda}\sum_{k\in\Z}\,\sum_{r\in\Z\,:\,r\geq k-1}2^{(k-r)/2}
\sum_{j\,:\,\ell(R_j)\in I_r}\|\nu_b^j\|\\
&=\frac{1}{\lambda}\sum_{j}|\nu|(Q_j)\sum_{r\in\Z\,:\,2^{-r-1}\leq\ell(R_j)<2^{-r}}\,\sum_{k\in\Z\,:\,k\leq r+1}\,2^{(k-r)/2}\\
&\lesssim\frac{1}{\lambda}\sum_{j}|\nu|(Q_j)\leq\frac{C}{\lambda}\|\nu\|.
\end{split}
\end{equation}

The estimate
(\ref{4cota mesuresL1debil 1}) 
is a direct consequence of (\ref{4cota mesuresL1debil 5b}), (\ref{4cota mesuresL1debil 15b}), (\ref{4cota mesuresL1debil 10b}), (\ref{4cota mesuresL1debil 17b}), (\ref{4cota mesuresL1debil 16b}), (\ref{4cota mesuresL1debil 12b}), and (\ref{4cota mesuresL1debil 14b}).
\end{proof}

\section{$\VV_\rho\circ\TT^{\HH^n_\Gamma}$ is a bounded operator from $L^\infty(\HH^{n}_{\Gamma})$ to $BMO(\HH^{n}_{\Gamma})$} \label{teorema Lp no suau s2}

This section is devoted to the proof of the endpoint estimate $(c)$ of Theorem \ref{4unif rectif teo3b}. The use of Lemma \ref{lema pendent petita3} is also essential in this section.

We may assume that $\Gamma=\{(y,\A(y)):\, y\in\R^n\},$ where $\A:\R^n\to\R^{d-n}$ is some Lipschitz function with Lipschitz constant $\Lip(\A)$. We say that a function $f\in L^1_{loc}(\HH^n_\Gamma)$ belongs to $BMO(\HH^n_\Gamma)$ if there exists a constant $C>0$ such that $$\sup_D\inf_{c\in\R}\frac{1}{\HH^n_\Gamma(D)}\int_D|f-c|\,d\HH^n_\Gamma\leq C,$$
where the supremum is taken over all the sets of the type
$D:=\wit D\times\R^{d-n}$, where $\wit D$ is a cube in $\R^n$. For convenience of notation, given $a>0$ we define $aD:=a\wit D\times\R^{d-n}$ and $\ell(aD):=\ell(a\wit D)$. Notice that, since $\Gamma$ is an $n$-dimensional Lipschitz graph, we have $\HH^n_\Gamma(D)\approx\ell(D)^n$ for all cubes $\wit D\subset\R^n$. Moreover $(\Gamma,\HH^n_\Gamma)$ is a space of homogeneous type, and it is not hard to show that our definition of $BMO(\HH^n_\Gamma)$ is equivalent to the classical one for doubling measures (see \cite{Tolsa9} for a definition of $BMO$ on doubling measures).

\begin{proof}[{\bf {\em Proof of} Theorem \ref{4unif rectif teo3b}$(c)$}]
We have to prove that there exists a constant $C>0$ such that, for any $f\in L^\infty(\HH^{n}_{\Gamma})$ and any cube $\widetilde D\subset\R^n$, there exists some constant $c$ depending on $f$ and $\wit D$ such that 
\begin{equation}\label{lala2}
\begin{split}
\int_{\wit D\times\R^{d-n}}\big|(\VV_\rho\circ\TT^{\HH^{n}_{\Gamma}})f-c\big|\,d\HH^{n}_{\Gamma}\leq C\|f\|_{L^\infty(\HH^n_\Gamma)}\HH^n_\Gamma(\wit D\times\R^{d-n}).
\end{split}
\end{equation}

Let $f$ and $\wit D$ be as above, and set $D:=\wit D\times\R^{d-n}$, $f_1:=f\chi_{3D}$, and $f_2:=f-f_1$.
First of all, by H\"{o}lder's inequality, Theorem \ref{4main theorem lip}, and since $\HH^{n}_{\Gamma}(3D)\approx\HH^{n}_{\Gamma}(D)$ because $\Gamma$ is a Lipschitz graph,  we have
\begin{equation}\label{lala}
\begin{split}
\int_{D}(\VV_\rho\circ\TT^{\HH^{n}_{\Gamma}})f_1\,d\HH^{n}_{\Gamma}&\leq\HH^{n}_{\Gamma}(D)^{1/2}
\bigg(\int\big((\VV_\rho\circ\TT^{\HH^{n}_{\Gamma}})f_1\big)^2\,d\HH^{n}_{\Gamma}\bigg)^{1/2}\\
&\lesssim\HH^{n}_{\Gamma}(D)^{1/2}\left(\|f_1\|^2_{L^\infty(\HH^n_\Gamma)}\HH^{n}_{\Gamma}(3D)\right)^{1/2}\lesssim\|f\|_{L^\infty(\HH^n_\Gamma)}\HH^{n}_{\Gamma}(D).
\end{split}
\end{equation}

Notice that $|(\VV_\rho\circ\TT^{\HH^{n}_{\Gamma}})(f_1+f_2)-(\VV_\rho\circ\TT^{\HH^{n}_{\Gamma}})f_2|\leq
(\VV_\rho\circ\TT^{\HH^{n}_{\Gamma}})f_1$, because $\VV_\rho\circ\TT^{\HH^{n}_{\Gamma}}$ is sublinear and positive. Then, for any $c\in\R$,
\begin{equation}\label{lala1}
\begin{split}
|(\VV_\rho\circ\TT^{\HH^{n}_{\Gamma}})f-c|&=|(\VV_\rho\circ\TT^{\HH^{n}_{\Gamma}})(f_1+f_2)-c|\\
&\leq|(\VV_\rho\circ\TT^{\HH^{n}_{\Gamma}})(f_1+f_2)-(\VV_\rho\circ\TT^{\HH^{n}_{\Gamma}})f_2|
+|(\VV_\rho\circ\TT^{\HH^{n}_{\Gamma}})f_2-c|\\
&\leq(\VV_\rho\circ\TT^{\HH^{n}_{\Gamma}})f_1
+|(\VV_\rho\circ\TT^{\HH^{n}_{\Gamma}})f_2-c|,
\end{split}
\end{equation}
hence, to prove (\ref{lala2}), by (\ref{lala}) and (\ref{lala1}) we are reduced to prove that, for some constant $c\in\R$,
\begin{equation}\label{teorema interpolacio Linf1}
\int_D\big|(\VV_\rho\circ\TT^{\HH^{n}_{\Gamma}})f_2-c\big|\,d\HH^{n}_{\Gamma}\leq C\|f\|_{L^\infty(\HH^n_\Gamma)}\HH^n_\Gamma(D).
\end{equation}

Set $z_D:=(\widetilde z_D,\A(\widetilde z_D))$, where $\wit z_D$ is the center of $\wit D\subset\R^n$, and take
$c:=(\VV_\rho\circ\TT^{\HH^{n}_{\Gamma}})f_2(z_D)$. We may assume that $c<\infty$. By the triangle inequality,
\begin{equation*}
\big|(\VV_\rho\circ\TT^{\HH^{n}_{\Gamma}})f_2(x)-c\,\big|^\rho
\leq \sup_{\{\epsilon_m\searrow0\}}\sum_{m\in\Z}
|(K\chi_{\epsilon_{m+1}}^{\,\epsilon_m}*(f_2\HH^n_\Gamma))(x)-
(K\chi_{\epsilon_{m+1}}^{\,\epsilon_m}*(f_2\HH^n_\Gamma))(z_D)|^{\rho}.
\end{equation*}
Given $x\in\Gamma\cap D$, let $\{\epsilon_m\}_{m\in\Z}$ be a decreasing sequence of positive numbers (which depends on $x$) such that
\begin{equation*}
\big|(\VV_\rho\circ\TT^{\HH^{n}_{\Gamma}})f_2(x)-c\,\big|^\rho\\
\leq 2\sum_{m\in\Z}|(K\chi_{\epsilon_{m+1}}^{\,\epsilon_m}*(f_2\HH^n_\Gamma))(x)-
(K\chi_{\epsilon_{m+1}}^{\,\epsilon_m}*(f_2\HH^n_\Gamma))(z_D)|^{\rho}.
\end{equation*}
Notice that $|(K\chi_{\epsilon_{m+1}}^{\,\epsilon_m}*(f_2\HH^n_\Gamma))(x)-
(K\chi_{\epsilon_{m+1}}^{\,\epsilon_m}*(f_2\HH^n_\Gamma))(z_D)|\leq\|f\|_{L^\infty(\HH^n_\Gamma)}(\Theta1_m+\Theta2_m)$, where
\begin{equation*}
\begin{split}
\Theta1_m:&=\int_{(3D)^c}\chi_{\epsilon_{m+1}}^{\,\epsilon_m}(x-y)\left|K(x-y)-K(z_D-y)\right|\,d\HH^n_\Gamma(y),\\
\Theta2_m:&=\int_{(3D)^c}|\chi_{\epsilon_{m+1}}^{\,\epsilon_m}(x-y)-\chi_{\epsilon_{m+1}}^{\,\epsilon_m}(z_D-y)||K(z_D-y)|\,d\HH^n_\Gamma(y).
\end{split}
\end{equation*}
Thus, 
\begin{equation}\label{sasa}
\big|(\VV_\rho\circ\TT^{\HH^{n}_{\Gamma}})f_2(x)-c\,\big|
\lesssim\|f\|_{L^\infty(\HH^n_\Gamma)}\bigg(\sum_{m\in\Z}(\Theta1_m+\Theta2_m)^\rho\bigg)^{1/\rho}.
\end{equation}

Since $\rho\geq 1$, we easily have
\begin{equation}\label{sasa1}
\begin{split}
\bigg(\sum_{m\in\Z}\Theta1_m^\rho\bigg)^{1/\rho}&\leq\sum_{m\in\Z}\Theta1_m
\lesssim\int_{(3D)^c}\sum_{m\in\Z}\chi_{\epsilon_{m+1}}^{\,\epsilon_m}(x-y)\frac{|x-z_D|}{|z_D-y|^{n+1}}
\,d\HH^n_\Gamma(y)\\
&\lesssim\ell(D)\int_{(3D)^c}|z_D-y|^{-n-1}\,d\HH^n_\Gamma(y)\lesssim1.
\end{split}
\end{equation}

The case of $\Theta2_m$ is more delicate. Since $\Gamma$ is a Lipschitz graph, there exists an integer $M>10$ depending only on $\Lip(\A)$ such that any $x\in\Gamma\cap D$ satisfies $|x-z_D|<2^M\ell(D)$. Without loss of generality, we can assume that there exists $m_0\in\Z$ such that $\epsilon_{m_0}=2^{M+2}\ell(D)$, just by adding the term $2^{M+2}\ell(D)$ to the fixed sequence $\{\epsilon_m\}_{m\in\Z}$. Obviously, we can also assume that $\epsilon_m>\epsilon_{m+1}$ for all $m\in\Z$.

We set $J_0:=\{m\in\Z\,:\,\epsilon_m\leq2^{M+2}\ell(D)\}=\{m\in\Z\,:\,m\geq m_0\}$ and, for $j>M+2$,
\begin{align*}
&J^1_j:=\{m\in\Z\,:\,2^{j-1}\ell(D)\leq\epsilon_{m+1}<\epsilon_m\leq2^{j}\ell(D)\text{ and }\epsilon_m-\epsilon_{m+1}\geq2^M\ell(D)\},\\
&J^2_j:=\{m\in\Z\,:\,2^{j-1}\ell(D)\leq\epsilon_{m+1}<\epsilon_m\leq2^{j}\ell(D)\text{ and }\epsilon_m-\epsilon_{m+1}<2^M\ell(D)\},\\
&J^3_j:=\{m\in\Z\,:\,2^{j-1}\ell(D)\leq\epsilon_{m+1}\leq2^{j}\ell(D)<\epsilon_m\}.
\end{align*}

Then $\Z=J_0\cup\big(\bigcup_{j>M+2}(J_j^1\cup J_j^2\cup J_j^3)\big)$. For the case of $m\in J_0$, we have the easy estimate
\begin{equation*}
\begin{split}
\bigg(\sum_{m\in J_0}\Theta2_m^\rho\bigg)^{1/\rho}
&\lesssim\sum_{m\in J_0}\int_{(3D)^c}\big(\chi_{\epsilon_{m+1}}^{\,\epsilon_m}(x-y)+\chi_{\epsilon_{m+1}}^{\,\epsilon_m}(z_D-y)
\big)\ell(D)^{-n}\,d\HH^n_\Gamma(y)\\
&\leq\int_{|x-y|\leq2^{M+2}\ell(D)}\frac{\,d\HH^n_\Gamma(y)}{\ell(D)^{n}}
+\int_{|z_D-y|\leq2^{M+2}\ell(D)}\frac{\,d\HH^n_\Gamma(y)}{\ell(D)^{n}}\lesssim1.
\end{split}
\end{equation*}

Assume that $m\in J^1_j$. Notice that $\supp\big(\chi_{\epsilon_{m+1}}^{\,\epsilon_m}(x-\cdot)-\chi_{\epsilon_{m+1}}^{\,\epsilon_m}(z_D-\cdot)\big)\subset A_m(x,z_D)$, where $A_m(x,z_D)$ denotes the symmetric difference between $A(x,\epsilon_{m+1},\epsilon_m)$ and $A(z_D,\epsilon_{m+1},\epsilon_m)$. Notice also that, since $m\in J_j^1$ and $x\in D\cap\Gamma$, the set $A_m(x,z_D)$ is contained in the union of annuli $A_1:=A(x,\epsilon_{m+1}-2^M\ell(D),\epsilon_{m+1}+2^M\ell(D))$ and $A_2:=A(x,\epsilon_m-2^M\ell(D),\epsilon_m+2^M\ell(D)).$ Hence, using that $m\in J^1_j$ and Lemma \ref{lema pendent petita3}, we have
\begin{equation}\label{teorema Lp no suau 1}
\begin{split}
\HH^n_\Gamma(\{y\in\R^d\,:\,|&\chi_{\epsilon_{m+1}}^{\epsilon_m}(x-y)-\chi_{\epsilon_{m+1}}^{\epsilon_m}(z_D-y)|\neq0\})
\leq\HH^n_\Gamma(A_1\cup A_2)\\
&\lesssim 2^{M+1}\ell(D)\Big(\epsilon_m+2^M\ell(D)\Big)^{n-1}
+2^{M+1}\ell(D)\Big(\epsilon_{m+1}+2^M\ell(D)\Big)^{n-1}\\
&\lesssim2^{j(n-1)}\ell(D)^n.
\end{split}
\end{equation}
Using that $|K(z_D-y)|\lesssim(2^{j}\ell(D))^{-n}$ for all $y\in A_m(x,z_D)\cap(3D)^c$, we get $$\Theta2_m\lesssim(2^{j}\ell(D))^{-n}2^{j(n-1)}\ell(D)^n=2^{-j}$$ and, since $\rho\geq2$ and $J^1_j$ contains at most $2^{j-M-1}$ indices, we have $\sum_{m\in J^1_j}\Theta2_m^\rho\lesssim2^{-j}$.

Assume now that $m\in J^2_j$. Then, using Lemma \ref{lema pendent petita3}, we obtain
\begin{equation*}
\begin{split}
\HH^n_\Gamma(\{y\in\R^d\,&:\,|\chi_{\epsilon_{m+1}}^{\,\epsilon_m}(x-y)-\chi_{\epsilon_{m+1}}^{\,\epsilon_m}(z_D-y)|\neq0\})\\
&\leq\HH^n_\Gamma(\{y\in\R^d\,:\,\chi_{\epsilon_{m+1}}^{\,\epsilon_m}(x-y)=1\})
+\HH^n_\Gamma(\{y\in\R^d\,:\,\chi_{\epsilon_{m+1}}^{\,\epsilon_m}(z_D-y)=1\})\\
&\lesssim(\epsilon_m-\epsilon_{m+1})\epsilon_m^{n-1},
\end{split}
\end{equation*}
and, as above, $|K(z_D-y)|\lesssim(2^{j}\ell(D))^{-n}$ for all $y\in A_m(x,z_D)\cap(3D)^c$. Since $m\in J^2_j$,
\begin{equation*}
\begin{split}
\Theta2^\rho_m&\lesssim(2^{j}\ell(D))^{-\rho n}\big((\epsilon_m-\epsilon_{m+1})\epsilon_m^{n-1}\big)^{\rho}\\
&\lesssim(2^{j}\ell(D))^{-\rho n}(\epsilon_m-\epsilon_{m+1})(2^M\ell(D))^{\rho-1}(2^j\ell(D))^{(n-1)\rho}\lesssim2^{-j\rho}\ell(D)^{-1}(\epsilon_m-\epsilon_{m+1})
\end{split}
\end{equation*}
and then, since $\rho\geq2$ and $j>M+2>12$,
\begin{equation*}
\begin{split}
\sum_{m\in J^2_j}\Theta2^\rho_m&\lesssim2^{-j\rho}\sum_{m\in J^2_j}\frac{\epsilon_m-\epsilon_{m+1}}{\ell(D)}\leq2^{-j\rho}2^{j-1}\approx2^{-j(\rho-1)}\leq2^{-j}.
\end{split}
\end{equation*}

Finally, assume that $m\in J^3_j$. Obviously, the set $J^3_j$ contains at most one term.
If $\epsilon_m-\epsilon_{m+1}<2^M\ell(D)$, arguing as in the case $m\in J_j^2$, we have
\begin{equation*}
\begin{split}
\HH^n_\Gamma(\{y\in\R^d\,:\,|\chi_{\epsilon_{m+1}}^{\,\epsilon_m}&(x-y)-\chi_{\epsilon_{m+1}}^{\,\epsilon_m}(z_D-y)|\neq0\})\lesssim(\epsilon_m-\epsilon_{m+1})\epsilon_m^{n-1}\\
&\lesssim2^M\ell(D)(2^j\ell(D)+2^M\ell(D))^{n-1}\lesssim2^{j(n-1)}\ell(D)^n,
\end{split}
\end{equation*}
and then $\Theta2_m\lesssim2^{j(n-1)}\ell(D)^n(2^{j-1}\ell(D))^{-n}\lesssim2^{-j}$.
On the contrary, if $\epsilon_m-\epsilon_{m+1}\geq2^M\ell(D)$, arguing as in the case $m\in J_j^1$, we have
$\supp\big(\chi_{\epsilon_{m+1}}^{\,\epsilon_m}(x-\cdot)-\chi_{\epsilon_{m+1}}^{\,\epsilon_m}(z_D-\cdot)\big)\subset A_m(x,z_D)\subset A_1\cup A_2$. Similarly to (\ref{teorema Lp no suau 1}), we have
$$\HH^n_\Gamma(A_1)\lesssim 2^{M+1}\ell(D)(\epsilon_{m+1}+2^{M}\ell(D))^{n-1}\lesssim\epsilon_{m+1}^{n-1}\ell(D)\leq2^{j(n-1)}\ell(D)^n,$$ and $|K(z_D-y)|\lesssim(2^{j}\ell(D))^{-n}$ for all $y\in A_1\cap(3D)^c$. If we denote by $j(\epsilon_{m})$ the positive integer such that $2^{j(\epsilon_m)-1}\ell(D)\leq\epsilon_m\leq2^{j(\epsilon_m)}\ell(D)$ (obviously, $j(\epsilon_m)>j$), we have
$\HH^n_\Gamma(A_2)\lesssim\epsilon_m^{n-1}\ell(D)\leq2^{j(\epsilon_m)(n-1)}\ell(D)^n,$ and $|K(z_D-y)|\lesssim(2^{j(\epsilon_m)}\ell(D))^{-n}$ for all $y\in A_2\cap(3D)^c$. Hence,
$\Theta2_m\lesssim2^{j(n-1)}\ell(D)^n(2^{j}\ell(D))^{-n}
+2^{j(\epsilon_m)(n-1)}\ell(D)^n(2^{j(\epsilon_m)}\ell(D))^{-n}\lesssim2^{-j}+2^{-j(\epsilon_m)}\lesssim2^{-j}$.
Therefore, since $J^3_j$ contains at most one term, $\sum_{m\in J^3_j}\Theta2^\rho_m\lesssim2^{-j\rho}\leq2^{-j}$.

We put all these estimates of $\Theta2_m$ for $m$ belonging to $J_0$, $J_j^1$, $J_j^2$, and $J_j^3$ together with (\ref{sasa1}) in (\ref{sasa}) and we conclude that
\begin{equation*}
\begin{split}
\big|(\VV_\rho\circ\TT^{\HH^{n}_{\Gamma}})f_2(x)-c\,\big|
&\lesssim\|f\|_{L^\infty(\HH^n_\Gamma)}\bigg(\sum_{m\in\Z}(\Theta1_m+\Theta2_m)^\rho\bigg)^{1/\rho}\\
&\lesssim\|f\|_{L^\infty(\HH^n_\Gamma)}\bigg(\sum_{m\in\Z}\Theta1^\rho_m\bigg)^{1/\rho}
+\|f\|_{L^\infty(\HH^n_\Gamma)}\bigg(\sum_{m\in J_0}\Theta2^\rho_m\bigg)^{1/\rho}\\
&\quad+\|f\|_{L^\infty(\HH^n_\Gamma)}\bigg(\sum_{j>M+2}\bigg(\sum_{m\in J^1_j}\Theta2^\rho_m+\sum_{m\in J^2_j}\Theta2^\rho_m+\sum_{m\in J^3_j}\Theta2^\rho_m\bigg)\bigg)^{1/\rho}\\
&\lesssim\|f\|_{L^\infty(\HH^n_\Gamma)}\bigg(1
+1+\bigg(\sum_{j>12}2^{-j}\bigg)^{1/\rho}\bigg)\lesssim\|f\|_{L^\infty(\HH^n_\Gamma)}.
\end{split}
\end{equation*}
Finally, (\ref{teorema interpolacio Linf1}) follows by integrating in $D$ this last estimate. This yields the boundedness of $\VV_\rho\circ\TT^{\HH^{n}_{\Gamma}}$ from $L^\infty(\HH^n_\Gamma)$ to $BMO(\HH^n_\Gamma)$.
\end{proof}

\section{$\VV_\rho\circ\TT^{\HH^n_\Gamma}$ is a bounded operator in $L^p(\HH^{n}_{\Gamma})$ for all $1<p<\infty$}\label{s endsec}
This section is devoted to complete the proof of Theorem \ref{4unif rectif teo3b} and Corollary \ref{coro pppp}.

\begin{proof}[{\bf {\em Proof of} Theorem \ref{4unif rectif teo3b}$(b)$}]
This is a straightforward application of Theorem \ref{4unif rectif teo3}.
\end{proof}
\begin{proof}[{\bf {\em Proof of} Theorem \ref{4unif rectif teo3b}$(a)$}]
Recall from Theorem \ref{4main theorem lip} that $\VV_\rho\circ\TT^{\HH^{n}_{\Gamma}}$ is bounded in $L^2(\HH^{n}_{\Gamma})$. We deduce the $L^p$ boundedness of the positive sublinear operator
$\VV_\rho\circ\TT^{\HH^{n}_{\Gamma}}$ by interpolation between the
pairs $(L^1(\HH^n_\Gamma),L^{1,\infty}(\HH^n_\Gamma))$  and
$(L^2(\HH^n_\Gamma),L^2(\HH^n_\Gamma))$  for $1<p<2$, and between
$(L^2(\HH^n_\Gamma),L^2(\HH^n_\Gamma))$  and
$(L^\infty(\HH^n_\Gamma),BMO(\HH^n_\Gamma))$ for $2<p<\infty$.
Let us remark that, in the latter case, the classical interpolation
theorem (see \cite[Theorem 6.8]{Duoandi}, for instance) would
require the operator $\VV_\rho\circ\TT^{\HH^{n}_{\Gamma}}$ to be
linear. Clearly, this fails in our case. However, an easy
modification of the arguments in \cite{Duoandi} using Lemma
\ref{lema interpolacio} below shows that that interpolation theorem is also valid for positive sublinear operators. Before stating the lemma, 
let us recall some definitions. Given $f\in L^1_{loc}(\HH^n_\Gamma)$, $x\in\R^d$, and a cube $\wit Q\in\R^n$, set $Q=\wit Q\times\R^{d-n}$ and define 
$$m_Q f:=\frac{1}{\HH^n_\Gamma(Q)}\int_Q f\,d\HH^n_\Gamma,$$
$Mf(x):=\sup_{Q\ni x}m_Q |f|$, and 
$M^\sharp f(x):=\sup_{Q\ni x}m_Q|f-m_Qf|$.

\begin{lema}\label{lema interpolacio}
Let $F:L^1_{loc}(\HH^{n}_{\Gamma})\to L^1_{loc}(\HH^{n}_{\Gamma})$ be a positive and sublinear operator. Then $(M^\sharp\circ F)(f+g)\lesssim (M\circ F)f+(M^\sharp\circ F)g$ for all functions $f,g\in L^1_{loc}(\HH^{n}_{\Gamma})$.
\end{lema}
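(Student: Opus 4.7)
The plan is to prove the inequality pointwise: fix $x\in\R^d$ and an arbitrary cube $Q\ni x$ of the form $\wit Q\times\R^{d-n}$, then bound $m_Q|F(f+g)-m_QF(f+g)|$ by a universal multiple of $(M\circ F)f(x)+(M^\sharp\circ F)g(x)$; taking the supremum over all such $Q$ yields the lemma.

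The first step is to replace the awkward centering constant $m_Q F(f+g)$ by one better adapted to the decomposition, namely $m_Q F(g)$. Using the elementary inequality $m_Q|h-m_Q h|\leq 2m_Q|h-c|$ valid for any $c\in\R$ (applied with $h=F(f+g)$ and $c=m_Q F(g)$), together with the triangle inequality, one obtains
\begin{equation*}
m_Q|F(f+g)-m_Q F(f+g)|\leq 2m_Q|F(f+g)-F(g)|+2m_Q|F(g)-m_Q F(g)|.
\end{equation*}
The second summand is bounded by $2(M^\sharp\circ F)g(x)$ directly from the definition of $M^\sharp$, so it remains to control the first.

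For the first summand the key pointwise bound is $|F(f+g)-F(g)|\leq F(f)$. Subadditivity of $F$ gives $F(f+g)\leq F(f)+F(g)$, hence $F(f+g)-F(g)\leq F(f)$. For the reverse direction, writing $g=(-f)+(f+g)$ and applying subadditivity together with $F(-f)=F(f)$ (the latter being a consequence of the homogeneity built into the notion of a sublinear operator, and for operators like $\VV_\rho\circ\TT^{\HH^n_\Gamma}$ following from the linearity of $\TT^{\HH^n_\Gamma}$ in its argument and the invariance of the $\ell^\rho$-variation under a global sign change) yields $F(g)\leq F(f)+F(f+g)$, i.e., $F(g)-F(f+g)\leq F(f)$. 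Since $F\geq 0$ by positivity, combining the two inequalities gives the pointwise bound $|F(f+g)-F(g)|\leq F(f)$, whence $m_Q|F(f+g)-F(g)|\leq m_Q Ff\leq (M\circ F)f(x)$.

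Putting the two estimates together yields $m_Q|F(f+g)-m_Q F(f+g)|\leq 2(M\circ F)f(x)+2(M^\sharp\circ F)g(x)$, and taking the supremum over all admissible cubes $Q\ni x$ completes the proof with implicit constant $2$. The only step that genuinely uses more than a formal manipulation is the derivation of $|F(u)-F(v)|\leq F(u-v)$, which is where the hypothesis of sublinearity (rather than mere subadditivity) enters; for the concrete operators of interest this is immediate, so no real obstacle is expected.
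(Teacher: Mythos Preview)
Your proof is correct and follows essentially the same route as the paper: both replace the centering constant $m_QF(f+g)$ by $m_QF(g)$, invoke the pointwise bound $|F(f+g)-F(g)|\leq F(f)$ coming from sublinearity and positivity, and then bound the two resulting terms by $(M\circ F)f(x)$ and $(M^\sharp\circ F)g(x)$. The only cosmetic difference is that you apply the inequality $m_Q|h-m_Qh|\leq 2m_Q|h-c|$ at the outset (picking up an explicit factor~$2$), whereas the paper works directly with $m_Q|F(f+g)-m_QF(g)|$ and invokes the equivalence $(M^\sharp h)(x)\approx\sup_{Q\ni x}\inf_{a\in\R}m_Q|h-a|$ at the end.
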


By using Lemma \ref{lema interpolacio} and the fact that $\|M f\|_{L^p(\HH^{n}_{\Gamma})}\lesssim\|M^\sharp f\|_{L^p(\HH^{n}_{\Gamma})}$ for $f\in L^{p_0}(\HH^{n}_{\Gamma})$ and $1\leq p_0\leq p<\infty$ (see \cite[Lemma 6.9]{Duoandi}), one can reprove the interpolation theorem \cite[Theorem 6.8]{Duoandi} applied to $\VV_\rho\circ\TT^{\HH^{n}_{\Gamma}}$ with minor modifications in the original proof.
\end{proof}

\begin{proof}[{\bf {\em Proof of} Lemma \ref{lema interpolacio}}]
If $F$ is sublinear and positive, one has that $|F(f)(x)-F(g)(x)|\leq F(f-g)(x)$ for all functions $f,g\in L^1_{loc}(\HH^{n}_{\Gamma})$. Let $\wit Q$ be a cube in $\R^n$, and set $Q=\wit Q\times\R^{d-n}\subset\R^d$. Then, for $x,y\in Q\cap\Gamma$,
\begin{equation*}
\begin{split}
|F(f+g)(y)-m_Q(Fg)|&\leq|F(f+g)(y)-Fg(y)|+|Fg(y)-m_Q(Fg)|\\
&\leq|Ff(y)|+|Fg(y)-m_Q(Fg)|.
\end{split}
\end{equation*}
Hence, $m_Q|F(f+g)-m_Q(Fg)|\leq m_Q|Ff|+m_Q|Fg-m_Q(Fg)|\leq (M\circ F)f(x)+(M^\sharp\circ F)g(x)$ and, by taking the supremum over all possible cubes $\wit Q\subset\R^n$ such that $Q\ni x$, we conclude $(M^\sharp\circ F)(f+g)(x)\lesssim(M\circ F)f(x)+ (M^\sharp\circ F)g(x)$ (recall that $(M^\sharp\circ F)h(x)\lesssim\sup_{Q\ni x}\inf_{a\in\R}m_Q|Fh-a|$ for all $h\in L^1_{loc}(\HH^{n}_{\Gamma})$).
\end{proof}

\begin{proof}[{\bf {\em Proof of} Corollary \ref{coro pppp}}]
The arguments follow closely the proof of \cite[Theorem 20.27]{Mattila-llibre}. First of all, we may assume that $E$ is a Lipschitz graph with slope smaller than 1, since $\HH^n$ almost all $E$ can be covered with countably many $\CC^1$ manifolds which in turn can be covered by Lipschitz graphs with small slope. By the Lebesgue decomposition theorem and Radon-Nikodym theorem (see \cite[Theorem 2.17]{Mattila-llibre} for the real case, for example), there exists $f\in L^1(\HH^n_E)$ and a finite complex Radon measure $\nu_s$ such that $\HH^n_E$ and $|\nu_s|$ are mutually singular and $\nu=f\HH^n_E+\nu_s$.

Given $K$ satisfying (\ref{4eq333}), by Theorem \ref{4unif rectif teo3b}$(b)$ we have $(\VV_{\rho}\circ\TT^{\HH^n_E})f(x)<\infty$ for $\HH^{n}$ almost all $x\in E$. Therefore, for any decreasing sequence $\{\epsilon_m\}_{m\in\Z}$, $\{T_{\epsilon_m}^{\mu}f(x)\}_{m\in\Z}$ is a Cauchy sequence, so it is convergent. Thus $\lim_{\epsilon\to0}T^{\HH^n_E}_\epsilon f(x)$ exists for $\HH^{n}$ almost all $x\in E$. Therefore, we may assume that $\nu=\nu_s$. The rest of the proof is almost the same of \cite[Theorem 20.27]{Mattila-llibre}  (just replace $T^*$ by $\VV_\rho\circ\TT$ in the proof in \cite{Mattila-llibre} and use Theorem \ref{4unif rectif teo3}). The details are left for the reader.
\end{proof}

\end{document}